% SIAM Article Template
\documentclass[onefignum,onetabnum]{siamart190516} 

% Information that is shared between the article and the supplement
% (title and author information, macros, packages, etc.) goes into
% ex_shared.tex. If there is no supplement, this file can be included
% directly.

% Packages and macros go here
\usepackage{lipsum}
\usepackage{amssymb, amsmath, amsfonts, listings, mathrsfs}
\usepackage{graphicx}
\usepackage{epstopdf}
\usepackage{xcolor}
\usepackage[noend]{algpseudocode}
\usepackage{hyperref}

\usepackage[labelfont=bf]{caption}
\ifpdf
\DeclareGraphicsExtensions{.eps,.pdf,.png,.jpg}
\else
\DeclareGraphicsExtensions{.eps}
\fi

% Add a serial/Oxford comma by default.

\newcommand{\Def}{\stackrel{\mathrm{def}}{=}}

\newcommand{\dom}{{\rm dom \,}}
\newcommand{\beq}{\begin{equation}}
\newcommand{\eeq}{\end{equation}}

\newcommand{\R}{\mathbb{R}}

\newcommand{\E}{\mathbb{E}}

\newcommand{\SetEQ}{\setcounter{equation}{0}}

\newcommand{\refGE}[1]{\ensuremath{\stackrel{\cref{#1}}{\geq}}}

\renewcommand\arraystretch{1}
\newcommand{\ba}{\begin{array}}
	\newcommand{\ea}{\end{array}}
\newcommand{\beann}{\begin{eqnarray*}}
	\newcommand{\eeann}{\end{eqnarray*}}
\newcommand{\bea}{\begin{eqnarray}}
\newcommand{\eea}{\end{eqnarray}}

\newcommand{\BT}{\begin{theorem}}
	\newcommand{\ET}{\end{theorem}}
\newcommand{\BL}{\begin{lemma}}
	\newcommand{\EL}{\end{lemma}}
\newcommand{\BC}{\begin{corollary}}
	\newcommand{\EC}{\end{corollary}}
\newcommand{\BE}{\begin{example}}
	\newcommand{\EE}{\end{example}}
\newcommand{\BD}{\begin{definition}}
	\newcommand{\ED}{\end{definition}}
\newcommand{\BR}{\begin{remark}}
	\newcommand{\ER}{\end{remark}}
\newcommand{\BAS}{\begin{assumption}}
	\newcommand{\EAS}{\end{assumption}}
\newcommand{\BI}{\begin{itemize}}
	\newcommand{\EI}{\end{itemize}}

\newcommand{\BMP}{\begin{minipage}{9.5cm}}
	\newcommand{\EMP}{\end{minipage}}
\newcommand{\MPT}{\begin{minipage}{11.5cm}}
	\newcommand{\EPT}{\end{minipage}}

\newcommand{\la}{\langle}
\newcommand{\ra}{\rangle}

\def\argmin{\mathop{\rm argmin}}

% Used for creating new theorem and remark environments
\newsiamremark{remark}{Remark}
\newsiamremark{hypothesis}{Hypothesis}
\crefname{hypothesis}{Hypothesis}{Hypotheses}
\newsiamthm{claim}{Claim}
\newsiamthm{example}{Example}
\newsiamthm{assumption}{Assumption}

% Sets running headers as well as PDF title and authors
\headers{Contracting Proximal Methods}{Nikita Doikov and Yurii Nesterov}

\slugger{siopt}{}{30}{4}{3146-3169}

% Title. If the supplement option is on, then "Supplementary Material"
% is automatically inserted before the title.
\title{Contracting Proximal Methods for
	Smooth Convex \\ Optimization\thanks{Received by the editors December 18, 2019; accepted for publication (in revised form) August
		5, 2020; published electronically November 10, 2020. 
		\url{https://doi.org/10.1137/19M130769X}
		\funding{The research results of this paper were
			obtained in the framework of ERC Advanced Grant 788368.}}}

% Authors: full names plus addresses.
\author{Nikita Doikov\thanks{Institute of Information and
		Communication Technologies, Electronics and Applied
		Mathematics (ICTEAM), Catholic University of Louvain (UCL),
		1348 Louvain-la-Neuve, Belgium
		(\email{Nikita.Doikov@uclouvain.be}).
	    ORCID: 0000-0003-1141-1625.}
	\and Yurii Nesterov\thanks{Center for Operations Research and
		Econometrics (CORE), Catholic University of Louvain (UCL),
		1348 Louvain-la-Neuve, Belgium
		(\email{Yurii.Nesterov@uclouvain.be}),
		ORCID: 0000-0002-0542-8757.
	} }

\usepackage{amsopn}

% Optional PDF information
\ifpdf
\hypersetup{
  pdftitle={Contracting Proximal Methods},
  pdfauthor={Nikita Doikov and Yurii Nesterov}
}
\fi

% The next statement enables references to information in the
% supplement. See the xr-hyperref package for details.

%\externaldocument{ex_supplement}

% FundRef data to be entered by SIAM
%<funding-group specific-use="FundRef">
%<award-group>
%<funding-source>
%<named-content content-type="funder-name"> 
%</named-content> 
%<named-content content-type="funder-identifier"> 
%</named-content>
%</funding-source>
%<award-id> </award-id>
%</award-group>
%</funding-group>

\floatstyle{plaintop}
\restylefloat{algorithm}

\begin{document}

%\slugger[\siopt, 2, 3]
%\siampretitle{}
\maketitle

% REQUIRED
\begin{abstract}
  In this paper, we propose new accelerated methods for smooth convex
  optimization, called contracting proximal methods.
  At every step of these methods, we need to minimize
  a contracted version of the objective function augmented
  by a regularization term in the form of
  Bregman divergence.
  We provide global convergence analysis for a general scheme
  admitting inexactness in solving the auxiliary subproblem.
  In the case of using for this purpose high-order tensor methods,
  we demonstrate an acceleration effect
  for both convex and uniformly convex composite objective
  functions. Thus, our construction explains acceleration for methods of any order
  starting from one. The augmentation of the number of calls of
  oracle due to computing the contracted proximal steps is
  limited by the logarithmic factor in the worst-case complexity
  bound.
\end{abstract}

% REQUIRED
\begin{keywords}
  convex optimization, proximal method,
  accelerated methods, global complexity bounds,
  high-order algorithms
\end{keywords}

% REQUIRED
\begin{AMS}
	49M15, 49M37, 65K05, 90C25, 90C30
\end{AMS}

\begin{DOI}
	10.1137/19M130769X
\end{DOI}

%%%%%%%%%%%%%%%%%%%%%%%%%%%%%%%%%
%%%%%%%%%%%%%%%%%%%%%%%%%%%%%%%%%
%%%%%%%%%%%%%%%%%%%%%%%%%%%%%%%%%
\section{Introduction} \label{SectionIntroduction}
%%%%%%%%%%%%%%%%%%%%%%%%%%%%%%%%%
%%%%%%%%%%%%%%%%%%%%%%%%%%%%%%%%%
%%%%%%%%%%%%%%%%%%%%%%%%%%%%%%%%%

One of the classical iterative methods in theoretical
optimization is the proximal point
algorithm~\cite{rockafellar1976monotone}. This method, as
applied to minimizing a convex function $f: \dom f \to
\R$, consists of solving at each iteration the following
subproblem:
\beq \label{ProxMethod}
\ba{rcl}
x_{k + 1} & = & \argmin\limits_{x}
\Bigl\{
a_{k + 1} f(x) + \frac{1}{2}\|x_k - x\|^2
\Bigr\},
\qquad k \geq 0,
\ea
\eeq
where $\|\cdot\|$ is the standard Euclidean norm, and $\{
a_k \}_{k \geq 1}$ is a sequence of positive coefficients.
In general, we can hope only to use an inexact solution of
the subproblem~\cref{ProxMethod}
(see~\cite{guler1991convergence,solodov2001unified,schmidt2011convergence}
for the convergence analysis). An important observation is
that the regularized objective in~\cref{ProxMethod} is
\textit{strongly convex}. Therefore, we can hope that
computing an (inexact) proximal step is usually simpler
than solving the initial problem.

For a function $f \in \mathscr{F}^{1,1}_L$ (convex
differentiable functions with Lipschitz continuous
gradients), we can set all values of the coefficients
$a_k$ equal to a positive constant. This gives a global
sublinear rate of convergence of the iterations~\cref{ProxMethod} in
functional residual of the order~$O(1/k)$. 
This rate is the same rate as that of
the gradient method~\cite{nesterov2018lectures}.

For the same class of functions, we can get a faster rate
of convergence of the order $O(1/k^2)$ using the
accelerated gradient method~\cite{nesterov1983method}.
This is the best possible rate achievable for the
first-order black-box optimization on
$\mathscr{F}^{1,1}_L$~\cite{nemirovskii1983problem}. An
accelerated variant of the proximal point algorithm with
the optimal rate of convergence was proposed
in~\cite{guler1992new} (see
also~\cite{salzo2012inexact,lin2015universal,lin2018catalyst,ivanova2019adaptive} 
for extensions and some applications).

In this paper, we present a new family of proximal-type
algorithms for smooth convex optimization called
\textit{contracting proximal methods}, which includes an
accelerated algorithm from~\cite{guler1992new} as a
particular case and provides a systematic way for
constructing faster proximal accelerated methods for
high-order optimization. Thus, for the class of convex
functions, whose $p$-th derivative is Lipschitz continuous
($p \geq 1$), our new methods achieve the $O(1 / k^{p +
	1})$-rate of convergence for the outer proximal
iterations, while the inner subproblems can be efficiently
solved up to desired accuracy by the high-order tensor
methods~\cite{nesterov2018implementable}.
Note that this rate can also be achieved by 
a direct acceleration scheme, utilizing the notion of
estimating sequences~\cite{baes2009estimate, nesterov2018implementable}.
It can be improved up to the level $O(1 / k^{ \frac{3p + 1}{2} })$
by using a special line-search on each iteration~\cite{monteiro2013accelerated,gasnikov2019near}.
The latter rate was shown to be the optimal one~\cite{arjevani2019oracle}.

The main difference between contracting proximal methods
and the classical approach~\cref{ProxMethod} consists in
employing the \textit{contracted} objective function
(which provides the methods with their name) and the
\textit{Bregman divergence} (notation $\beta_d(x; y)$)
instead of the usual Euclidean norm. The exact form of our
method is very simple:
\beq \label{ContrMethod}
\left. \ba{rcl} v_{k + 1} & = & \argmin\limits_{x} \Bigl\{
A_{k + 1} f\bigl( \frac{a_{k + 1}x + A_k x_k}{A_{k + 1}}
\bigr) +
\beta_d(v_k; x) \Bigr\} \\
\\
x_{k + 1} & = & \frac{a_{k + 1} v_{k + 1} + A_k x_k}{A_{k
		+ 1}} \ea \right\},  \quad k \geq 0.
\eeq
Thus, we use a sequence of auxiliary points $\{ v_k \}_{k
	\geq 0}$ and the scaling coefficients $A_k \Def \sum_{i =
	1}^k a_i$.

Let us illustrate the basic idea behind this construction
by the simplest \textit{Euclidean setting}, when
$\beta_d(x;y) \equiv \frac{1}{2}\|x - y\|^2$. We are going
to ensure at each iteration $k \geq 0$ the following
condition:
\beq \label{IntroMainIneq}
\ba{rcl}
\frac{1}{2}\|x_0 - x\|^2 + A_k f(x) & \geq & \frac{1}{2}\|v_k - x\|^2 + A_k f(x_k),
\qquad x \in \dom f.
\ea
\eeq
A direct consequence of~\cref{IntroMainIneq} is the
global convergence bound
\beq \label{IntroConverg}
\ba{rcl}
f(x_k) - f^{*} & \leq & \frac{\|x_0 - x^{*}\|^2}{2 A_k}.
\ea
\eeq
We can propagate inequality~\cref{IntroMainIneq} to the
next iteration by a trivial observation:
$$
\ba{rcl}
\frac{1}{2}\|x_0 - x\|^2 + A_{k + 1} f(x) & = &
\frac{1}{2}\|x_0 - x\|^2 + A_k f(x) + a_{k + 1} f(x) \\
\\
& \refGE{IntroMainIneq} &
\frac{1}{2}\|v_k - x\|^2 + A_k f(x_k) + a_{k + 1} f(x) \\
\\
& \geq & \frac{1}{2}\|v_k - x\|^2
+ A_{k + 1} f\bigl( \frac{a_{k + 1} x + A_k x_k}{A_{k + 1}}  \bigr)
\; \equiv \; h_{k + 1}(x),
\ea
$$
where the last inequality is due to convexity of the
objective. Note that the first step of contracting
proximal method~\cref{ContrMethod} is defined exactly as
follows:
\beq\label{prob-V}
\ba{rcl}
v_{k + 1} & = & \argmin\limits_{x \in \E} h_{k + 1}(x).
\ea
\eeq
Hence, by strong convexity of $h_{k + 1}(\cdot)$, we
finally justify that
$$
\ba{rcl}
h_{k + 1}(x) & \geq & h_{k + 1}(v_{k + 1}) + \frac{1}{2}\|v_{k + 1} - x\|^2
\; \geq \; A_{k + 1}f(x_{k + 1}) + \frac{1}{2}\|v_{k + 1} - x\|^2.
\ea
$$

Thus, for the Euclidean setting,
iteration~\cref{ContrMethod} immediately results in the
convergence guarantee~\cref{IntroConverg}. However, we
are still free in the choice of coefficients $\{ a_k \}_{k
	\geq 1}$. The only reason for bounding their growth
consists in keeping the complexity of the optimization
problem~\cref{prob-V} at an acceptable
level.\footnote{Hence, these bounds should take into
	account the efficiency of the auxiliary minimization
	scheme used for solving the problem (\ref{prob-V}).} For
$f \in \mathscr{F}^{1,1}_L$, the recommended choice of
$a_{k + 1}$ corresponds to the quadratic equation
\cite{nesterov1983method}:
\beq\label{def-AK}
\ba{rcl}
a_{k + 1}^2  & = & \frac{1}{L}(a_{k + 1} + A_k).
\ea
\eeq
It is easy to see that this choice results in the optimal
$O(1 / k^2)$-rate of convergence for the method. On the
other hand, it makes the \textit{condition number} of the
problem \cref{prob-V} equal to an absolute constant.
Let us assume for simplicity that $f$ is two times continuously differentiable.
Then, in view of the presence of the regularization
term, $\nabla^2 h_{k+1}(x) \succeq I$.
On the other hand,
$$
\ba{rcl}
\nabla^2 h_{k+1}(x) & = & I + {a_{k+1}^2 \over A_{k+1}}
\nabla^2 f\bigl( \frac{a_{k + 1} x + A_k x_k}{A_{k + 1}}
\bigr) \; \overset{\cref{def-AK}}{\preceq} \; 2 I.
\ea
$$
Hence, we are able to solve the problem \cref{prob-V}
very efficiently by a usual gradient method (see the
details in~\cref{SectionTensorMethods}).

It is remarkable that exactly the same reasoning justifies
the accelerated versions of {\em all} high-order tensor
methods ($p \geq 2$). The only difference consists in the
degree of the proximal term, which must be compatible with
the order of optimization scheme used for solving the
problem~\cref{prob-V}.

Our first-order contracting proximal method for the Euclidean
setting (described above) produces the same sequence of
points as the accelerated Proximal Point Algorithm
from~\cite{guler1992new}. However, now we can also employ 
the Bregman divergence, which sometimes is more suitable
to the topology of our function and ensures faster
convergence.

The rest of the paper is organized as follows.
\Cref{SectionNotation} introduces notation used
throughout the paper and describes our problem of interest
in the composite form. We also give a definition of
Bregman divergence and mention some of its properties.

In~\cref{SectionProximalMethod}, we introduce a
general contracting proximal method (formulated 
as~\cref{MainAlgorithm}). 
We present its convergence analysis for a
problem in composite form and arbitrary Bregman
divergence. We study both convex and strongly convex cases
under inexactness in proximal steps.
\cref{TheoremGlobal} specifies how the parameters
of the algorithm and inner accuracy affect the convergence
rate.

In~\cref{SectionTensorMethods}, we discuss
implementation of one iteration of our method, under
the assumption that 
the $p$-th derivative ($p \geq 1$) of the
smooth part of the objective is Lipschitz continuous. 
We present a fully defined optimization scheme (\cref{TensorAlgorithm}),
with incorporated steps of the tensor method of a certain
degree. The resulting algorithm achieves the accelerated rate
of convergence, with an additional logarithmic factor for
the number of total oracle calls. The final complexity
estimate for this scheme is given by~\cref{TheoremFinal1,TheoremFinal2}. 

\Cref{SectionNumerical} contains numerical experiments.
\Cref{SectionConclusion} has some final remarks.

%%%%%%%%%%%%%%%%%%%%%%%%%%%%%%%%%%
%%%%%%%%%%%%%%%%%%%%%%%%%%%%%%%%%%
%%%%%%%%%%%%%%%%%%%%%%%%%%%%%%%%%%
\section{Notation}
\label{SectionNotation}\SetEQ
%%%%%%%%%%%%%%%%%%%%%%%%%%%%%%%%%%
%%%%%%%%%%%%%%%%%%%%%%%%%%%%%%%%%%
%%%%%%%%%%%%%%%%%%%%%%%%%%%%%%%%%%

In what follows, we denote by $\E$ a finite-dimensional
real vector space and by $\E^{*}$ its dual space, which is
a space of linear functions on $\E$. The value of function
$s \in \E^{*}$ at point $x \in \E$ is denoted by $\langle
s, x \rangle$.

Let us fix some arbitrary (possibly non-Euclidean) norm $\| \cdot \|$
on space $\E$ and define the dual norm $\| \cdot \|_{*}$ on $\E^{*}$ in
the standard way:
$$
\ba{rcl}
\|s\|_{*} &  \Def &  \sup\limits_{h \in \E} \{ \la s, h \ra : \|h\| \leq 1 \}.
\ea
$$

For a smooth function $f$, its gradient at point $x$ is
denoted by $\nabla f(x)$, and its Hessian
is $\nabla^2f(x)$. Note that
$$
\ba{rcl}
\nabla f(x) & \in & \E^{*},
\qquad
\nabla^2 f(x) h \;\, \in \;\, \E^{*},
\qquad
x \in \dom f, \;\; h \in \E.
\ea
$$
Higher derivatives
are denoted as $D^p f(x)[\cdot]$, which are $p$-linear
symmetric forms on $\E$,
and the norm is induced:
$$
\ba{rcl}
\| D^p f(x) \| & \Def &
\sup\limits_{h_1, \dots, h_p \in \E}
\bigl\{ D^p f(x)[h_1, \dots, h_p]: \; \|h_i\| \leq 1, i = 1, \dots, p
\bigr\}.
\ea
$$

For convex but
not necessary differentiable function $\psi$, we denote by
$\partial \psi(x) \subset \E^{*}$ its subdifferential at
point $x \in \dom \psi$.

Our goal is to solve the following composite minimization problem:
\begin{equation} \label{MainProblem}
\min_{x \in \dom F} \Bigl\{ F(x) \equiv f(x) + \psi(x) \Bigr\},
\end{equation}
where $f$ is several times differentiable on its open
domain convex function, with some reasonable assumptions
on the growth of its derivatives (for example, that its
$p$-th derivative is Lipschitz continuous for some $p \geq
1$), and $\psi: \E \to \R \cup \{ +\infty \}$ is a proper
closed convex function, which we assume to be
\textit{simple}, but possibly nondifferentiable, with
$\dom \psi \subset \dom f$. We also assume that solution
$x^{*} \in \dom F$ of problem~\cref{MainProblem} does
exist, denoting $F^{*} = F(x^{*})$.

Let us fix arbitrary differentiable strictly convex function
$d: \dom \psi \to \R$, which we call 
the\textit{prox function}.
Then, we denote by $\beta_{d}(x; y)$ the corresponding
\textit{Bregman divergence}, centered at $x$:
$$
\ba{rcl}
\beta_{d}(x; y) & \Def &
d(y) - d(x) - \la \nabla d(x), y - x \ra.
\ea
$$

We say that function $d$ is \textit{uniformly convex} of degree $p + 1$ 
(with respect to the norm $\| \cdot \|$) with constant $\sigma_{p + 1}(d) > 0$
if it holds for all $x, y \in \dom d$:
\beq \label{UniformConvexity}
\ba{rcl}
\beta_d(x; y) & \geq & \frac{\sigma_{p + 1}(d)}{p + 1}\|x - y\|^{p + 1}.
\ea
\eeq

The main example, which naturally appears in tensor methods
(see~\cite{nesterov2018implementable}) and which we use
in~\cref{SectionTensorMethods}, is the following
prox function.
\BE
$$
\ba{rcl}
d(x) &  \equiv & \frac{1}{p + 1}\|x - x_0\|^{p + 1}
\ea
$$
for some $p \geq 1$. For the Euclidean norm
(when $\|x\| \equiv \la Bx, x \ra^{1/2}$ for a fixed postive-definite linear operator
$B = B^{*} \succ 0$) this prox function is
\textit{uniformly convex} of degree $p + 1$ with constant~$2^{1 - p}$
(see Lemma~5 in~\cite{doikov2019minimizing}), so it holds
that 
\beq \label{ProxUnifConv}
\ba{rcl}
\beta_d(x; y) & \geq & \frac{2^{1 - p}}{p + 1}\|x - y\|^{p + 1}, \qquad x, y \in \E.
\ea
\eeq
\EE
For more examples of available prox functions see~\cite{bauschke2016descent,lu2018relatively}.

The definition of Bregman divergence can be extended onto
nondifferentiable function $\psi$ by specifying a
particular subgradient $\psi'(x) \in \partial \psi(x)$:
$$
\ba{rcl}
\beta_{\psi}(x, \psi'(x); y) & \Def &
\psi(y) - \psi(x) - \la \psi'(x), y - x \ra.
\ea
$$
However, we will use simpler notation $\beta_{\psi}(x; y)$
if no ambiguity arises.

We say that function $\psi$ is \textit{strongly convex
with respect to} $d$ (see \cite{van2017forward,bauschke2016descent,lu2018relatively}) with
constant $\sigma_d(\psi) > 0$
if it holds for all $x, y \in \dom \psi$ and for all
$\psi'(x) \in \partial \psi(x)$ that
\beq \label{StronglyConvexDefinition}
\ba{rcl}
\beta_{\psi}(x, \psi'(x); y) & \geq & \sigma_d(\psi) \beta_d(x; y).
\ea
\eeq
Inequality~\eqref{StronglyConvexDefinition} always holds
with $\sigma_d(\psi) = 0$  just by convexity. An
interesting illustration of this concept is given by a
regularized Taylor polynomial of degree~$3$ for convex
function (see \cite{nesterov2018implementable}).
\BE
Let $f: \dom f \to \R$ be convex, with Lipschitz continuous third derivative:
$$
\ba{rcl}
\| D^3 f(y) - D^3 f(x) \| \; \leq \; L_3\|y - x\|, \qquad x, y \in \dom f.
\ea
$$
Denote by $\Omega_3(f, x; y)$ its Taylor approximation of degree~$3$
around some fixed point $x$,
$$
\ba{c}
\Omega_3(f, x; y)  \Def  f(x) + \la \nabla f(x), y - x \ra
+ \frac{1}{2} D^2 f(x)[y - x]^2
+ \frac{1}{6}D^{3}f(x)[y - x]^3,
\ea
$$
and consider its regularization of degree $4$, with some $\tau > 1$:
$$
\ba{rcl}
g(y) & \equiv & \Omega_3(f, x; y) + \frac{\tau^2 L_3}{8}\|y - x\|^4.
\ea
$$
Then, for the Euclidean norm, the function $g(\cdot)$
is strongly convex with respect
to the following prox function
(see Lemma~4 in~\cite{nesterov2018implementable}):
$$
\ba{rcl}
d(h) & \equiv &
\frac{1}{2} \left(1 - \frac{1}{\tau} \right)
D^2 f(x)[h]^2
+ \frac{\tau(\tau - 1) L_3}{8}\|h\|^4.
\ea
$$
\EE

Let us summarize some basic properties of Bregman
divergence, which follow directly from its definition. For
any pair $f_1, f_2$ of convex functions and all $x, y \in
\dom (f_1 + f_2)$ we have
\beq \label{BregmanSum}
\ba{rcl}
\beta_{a_1f_1 + a_2f_2}(x; y) & = & a_1\beta_{f_1}(x; y) +
a_2\beta_{f_2}(x; y), \qquad a_1, a_2 \geq 0.
\ea
\eeq
For any linear function $\ell(x) = a + \la g, x \ra$ we
have
\beq \label{BregmanLinear}
\ba{rcl}
\beta_{\ell}(x; y) & = & 0.
\ea
\eeq
Therefore, from~\cref{BregmanSum} and~\cref{BregmanLinear} we conclude that
\beq \label{BregmanBregman}
\ba{rcl}
\beta_f(x; y) & = & \beta_d(x; y),
\ea
\eeq
when $f(y) = \beta_d(z; y)$ for some fixed $z$. Now, consider the following
simple but general construction, which we use in a core of our analysis.
Let $h$ be a regularized composite objective:
$$
\ba{rcl}
h(y) & = & g(y) + a\psi(y) + \gamma \beta_d(z; y), \qquad a, \gamma \geq 0,
\ea
$$
where $g$ and $\psi$ are arbitrary closed convex functions, and $\psi$
is strongly convex with respect to $d$ with some constant $\sigma_d(\psi) \geq 0$.
Then we have, for every $x, y \in \dom h$ and every $h'(x) \in \partial h(x)$
\beq  \label{MainLemma}
\ba{rcl}
h(y) - h(x) - \la h'(x), y - x \ra & = & \beta_h(x; y) \\
\\
& \stackrel{\cref{BregmanSum},\cref{BregmanBregman}}{=} &
\beta_g(x; y) + a \beta_{\psi}(x; y) + \gamma \beta_{d}(x; y) \\
\\
& \geq & (a \sigma_d(\psi) + \gamma) \beta_d(x; y).
\ea
\eeq
In particular, for the exact minimum $T =
\argmin\limits_{y \in \E} h(y)$, we have
\beq  \label{MainLemmaExactMinimum}
\ba{rcl}
h(y) & \geq & h(T) + (a \sigma_d(\psi) + \gamma) \beta_d(T; y).
\ea
\eeq

%%%%%%%%%%%%%%%%%%%%%%%%%%%%%%%%%
%%%%%%%%%%%%%%%%%%%%%%%%%%%%%%%%%
%%%%%%%%%%%%%%%%%%%%%%%%%%%%%%%%%
\section{Contracting proximal method}
\label{SectionProximalMethod}\SetEQ
%%%%%%%%%%%%%%%%%%%%%%%%%%%%%%%%%
%%%%%%%%%%%%%%%%%%%%%%%%%%%%%%%%%
%%%%%%%%%%%%%%%%%%%%%%%%%%%%%%%%%

In our general scheme, we are going to maintain the
following inequality, for every $x \in \dom \psi$ and 
$k \geq 0$:
\beq \label{AcceleratedGuarantee}
\ba{rcl}
\gamma_0 \beta_d(x_0; x) + A_k F(x) & \geq & \gamma_k
\beta_d(v_k; x) + A_k F(x_k) + C_k(x),
\ea
\eeq
where $\{ x_k \}_{k \geq 0}$ and $\{ v_k \}_{k \geq 0}$ are sequences
of points from $\dom \psi$,
$\{ A_k \}_{k \geq 0}$
is a sequence of increasing numbers,
$$
\ba{rcccl}
a_{k + 1} & \Def & A_{k + 1} - A_k \; > \; 0,
\qquad
A_0 & = & 0,
\ea
$$
and $\{ \gamma_k \}_{k \geq 0}$
is a sequences of nondecreasing proximal coefficients,
$$
\ba{rcccl}
\gamma_{k + 1} & \geq & \gamma_k,
\qquad
\gamma_0 & > & 0.
\ea
$$
We would prefer to have functions $C_k(x)$ as big as possible.
Thus, if it happens to be $C_k(x^{*}) \geq 0$ for all $k \geq 1$,
then from~\cref{AcceleratedGuarantee}
we have a convergence guarantee,
$$
\ba{rcl}
F(x_k) - F^{*} & \leq & \frac{\gamma_0 \beta_d(x_0, x^{*})}{A_k},
\qquad k \geq 1,
\ea
$$
and the rate of convergence is determined by the growth of
coefficients $A_k$ toward infinity. However, generally
$C_k(x)$ may have arbitrary sign.

\bigskip

Let us discus a simple possibility for propagating
relation~\cref{AcceleratedGuarantee} to the next
iteration.
\beq \label{TriangleAnalysis}
\ba{rcl}
& & \mspace{-72mu}
\gamma_0 \beta_d(x_0; x) + A_{k + 1} F(x) \\
\\
& = &
\gamma_0 \beta_d(x_0; x) + A_k F(x) + a_{k + 1} F(x) \\
\\
& \overset{\cref{AcceleratedGuarantee}}{\geq} &
\gamma_k \beta_d(v_k; x) + A_k F(x_k) + a_{k + 1} F(x) + C_k(x)  \\
\\
&  \geq &
\gamma_k \beta_d(v_k; x)
+ A_{k + 1}f\bigl( \frac{a_{k + 1}x + A_k x_k}{A_{k + 1}} \bigr)
+ a_{k + 1} \psi(x)
+ A_k \psi(x_k)
+ C_k(x),
\ea
\eeq
where the last inequality is due to the convexity of $f$. Let
us consider a contracted objective with a regularizer from
the last step:
\beq \label{RegularizedObjective}
\ba{rcl}
h_{k + 1}(x)
& \Def &
A_{k + 1}f\bigl( \frac{a_{k + 1}x + A_k x_k}{A_{k + 1}} \bigr)
+ a_{k + 1} \psi(x)
+ \gamma_k \beta_d(v_k; x).
\ea
\eeq
This function is strongly convex with respect to
$d(\cdot)$ with parameter
\beq \label{GammaKDef}
\ba{rcl}
\sigma_{d}(h_{k + 1})
& \geq &
\gamma_{k + 1} \; \Def \;
a_{k + 1} \sigma_{d}(\psi) + \gamma_k.
\ea
\eeq
If we are able to compute the \textit{exact minimum}
\beq \label{InnerSubproblem}
\ba{rcl}
T & = & \argmin\limits_{x \in \E} h_{k + 1}(x),
\ea
\eeq
then by~\cref{MainLemmaExactMinimum} we see that
$$
\ba{rcl}
& & \mspace{-72mu}
h_{k + 1}(x) + A_k \psi(x_k) \\
\\
& \geq &
h_{k + 1}(T) +
\gamma_{k + 1} \beta_{d}(T; x) + A_k \psi(x_k) \\
\\
& = &
A_{k + 1}f\bigl( \frac{a_{k + 1}T + A_k x_k}{A_{k + 1}} \bigr)
+ a_{k + 1} \psi(T)
+ \gamma_k \beta_d(v_k; T)
+ \gamma_{k + 1} \beta_d(T; x) + A_k \psi(x_k) \\
\\
& \geq &
A_{k+1} F\bigl( \frac{a_{k + 1}T + A_k x_k}{A_{k + 1}} \bigr)
+ \gamma_k \beta_d(v_k; T) + \gamma_{k + 1} \beta_d(T; x).
\ea
$$
And it is natural to set $v_{k + 1} = T$ and
\beq \label{ConvexComb}
\ba{rcl}
x_{k + 1} & \Def & \frac{a_{k + 1}v_{k + 1} + A_k x_k}{A_{k + 1}}.
\ea
\eeq
Thus we would obtain guarantee~\cref{AcceleratedGuarantee} for the next step,
with
$$
\ba{rcl}
C_{k + 1}(x) & \equiv & C_k(x) + \gamma_k \beta_{d}(v_k; v_{k + 1})
\; \equiv \; \sum_{i = 1}^k \gamma_i \beta_d(v_i; v_{i + 1}) \geq 0.
\ea
$$

\bigskip

Now, instead of computing the exact
minimum~\cref{InnerSubproblem}, let us relax $v_{k + 1}
\in \dom \psi$ to be a point with a \textit{small norm of
	subgradient}:
\begin{equation} \label{InexactCondition}
\|s\|_{*} \; \leq \; \delta_{k + 1}
\quad
\text{for some}
\quad s \in \partial h_{k + 1}(v_{k + 1}).
\end{equation}
Note that condition~\cref{InexactCondition} can be easily
verified algorithmically since in composite setting we are
able to compute points with a small subgradient of $h_{k +
	1}$ (see \cite{nesterov2018implementable}).

Thus, we come to the following general scheme.

\vspace*{7pt}
\fbox{
	\hspace*{4pt}
	\centering
	\begin{minipage}{0.8\linewidth} 
		\begin{algorithm}[H] 
			\renewcommand{\thealgorithm}{1}
			\caption{\textbf{Contracting Proximal Method}}
			\label{MainAlgorithm}
			\noindent\makebox[\linewidth]{\rule{1.093\linewidth}{0.4pt}}
			\begin{algorithmic}[1]
				\vspace{5pt}
				\Require Choose $x_0 \in \dom \psi$, $\gamma_0 > 0$, set $v_0 := x_0$, $A_0 := 0$.
				\vspace*{5pt}
				\Ensure $k \geq 0$. \vspace*{5pt}
				\State Choose $a_{k + 1} > 0$. Set $A_{k + 1} := A_k + a_{k + 1}$. \vspace*{5pt}
				\State Denote contracted objective with regularizer: \vspace*{5pt}
				\Statex
				$\qquad h_{k + 1}(x) := A_{k + 1}f\bigl(\frac{a_{k + 1}x + A_k x_k}{A_{k + 1}} \bigr)
				+ a_{k + 1} \psi(x) + \gamma_k \beta_d(v_k; x)$.
				\vspace*{5pt}
				\State Choose accuracy $\delta_{k + 1} \geq 0$. \vspace*{5pt}
				\State Find $v_{k + 1} \in \dom \psi$ such that
				$\exists \, s \in \partial h_{k+1}(v_{k + 1}): \; \|s\|_{*} \leq \delta_{k + 1}$. \vspace*{5pt}
				\State Set $x_{k + 1} := \frac{a_{k + 1}v_{k + 1} + A_k x_k}{A_{k + 1}}$.  \vspace*{5pt}
				\State Set $\gamma_{k + 1} := \gamma_k + a_{k + 1} \sigma_d(\psi)$.
				\vspace*{5pt}
			\end{algorithmic}
		\end{algorithm}
	\end{minipage}
	\hspace*{4pt}
}
\vspace*{10pt}

At this moment, we need one additional assumption. It
relates the dual norm $\| \cdot \|_{*}$ (used at step~4)
with the Bregman divergence $\beta_d(v; x)$.

\BAS \label{AssumptionUniformlyConvex}
For some $p \geq 1$, prox-function $d(\cdot)$ is uniformly
convex of degree $p + 1$ with respect to the primal norm $\| \cdot \|$
with parameter $\sigma_{p + 1}(d)
> 0$ (see inequality~(\ref{UniformConvexity})).
\EAS

Let us write down the convergence guarantees of the
method.

\BT[convergence of contracting proximal method] \label{TheoremGlobal}
Let~\cref{AssumptionUniformlyConvex} hold. Then
for~\cref{MainAlgorithm} at all iterations $k \geq 0$ we have
\beq \label{InexactGuarantee}
\ba{rcl}
A_k \left( F(x_k) - F^{*} \right)
+ \gamma_k \beta_d(v_k; x^{*})
+  \sum\limits_{i = 1}^k \gamma_i\beta_d(v_{i - 1}; v_i)
& \leq & R_k(p, \delta),
\ea
\eeq
where
\beq \label{RkDef}
\ba{rcl}
R_k(p, \delta)
& \Def &
\left( \bigl( \gamma_0 \beta_d(x_0; x^{*}) \bigr)^{p \over p + 1} +
\left( { p + 1 \over \sigma_{p + 1}(d) } \right)^{1 \over p + 1}
\sum\limits_{i = 1}^k
{\delta_i \over \gamma_i^{1 / (p + 1)}} \right)^{p + 1 \over p}.
\ea
\eeq
\ET
\begin{proof}
First, let us ensure by induction in $k \geq 0$ the
following inequality:
\beq \label{AcceleratedGuarantee2}
\ba{rc}
& A_k \left( F(x_k) - F(x) \right)
+ \gamma_k \beta_d(v_k; x)
+ \sum\limits_{i = 1}^k \gamma_i \beta_d(v_{i - 1}; v_i) \\
\\
& \quad \leq \quad \gamma_0\beta_d(x_0; x) +
\sum\limits_{i = 1}^k \la s_i, v_i - x \ra, \quad x \in
\dom \psi,
\ea
\eeq
where $s_i \in \partial h_i(v_i)$. It is obviously true
for $k = 0$. Let it hold for some $k \geq 0$ and consider
the next. Note that~\cref{AcceleratedGuarantee2} is
exactly \cref{AcceleratedGuarantee} with
$$
\ba{rcl}
C_k(x) & \equiv &
\sum\limits_{i = 1}^k \Bigl[
\gamma_i \beta_d(v_{i - 1}; v_i) + \la s_i, x - v_i \ra
\Bigr].
\ea
$$
Therefore, we have
$$
\ba{rcl}
& & \mspace{-72mu}
\gamma_0 \beta_d(x_0; x) + A_{k+1} F(x) \\
\\
& \overset{\cref{TriangleAnalysis}}{\geq} &
h_{k + 1}(x) + A_k \psi(x_k) +  C_k(x) \\
\\
& \overset{\cref{MainLemma}}{\geq} &
h_{k + 1}(v_{k + 1}) + \la s_{k + 1}, x - v_{k + 1} \ra
+ \gamma_{k + 1} \beta_d(v_{k + 1}; x) + A_k \psi(x_k) +  C_k(x) \\
\\
& = &
A_{k + 1} f(x_{k + 1})
+ a_{k + 1} \psi(v_{k + 1}) + \gamma_{k + 1} \beta_d(v_{k + 1}; x)
+ A_k \psi(x_k) + C_{k + 1}(x) \\
\\
& \geq & A_{k + 1} F(x_{k + 1}) + \gamma_{k + 1}
\beta_d(v_{k + 1}; x) + C_{k + 1}(x).
\ea
$$
This is \cref{AcceleratedGuarantee2} for the next step.

Now, plugging $x \equiv x^{*}$
into~\cref{AcceleratedGuarantee2} and taking into account
the nonnegativity of all terms in the left-hand side, we get
$$
\ba{rcl}
\gamma_k \beta_d(v_k; x^{*}) & \leq & \gamma_0
\beta_d(x_0; x^{*}) + \sum\limits_{i = 1}^k \la s_i, v_i -
x^{*} \ra.
\ea
$$
Now, we need to estimate the right-hand side from above.
Using the uniform convexity~\cref{UniformConvexity}, we
conclude that for every $k \geq 0$
\beq \label{RecurrBound}
\ba{rcl}
{\gamma_k \sigma_{p + 1}(d) \over p + 1}\|v_k - x^{*}\|^{p + 1}
& \leq &
\gamma_0 \beta_d(x_0; x^*)
+ \sum\limits_{i = 1}^k \|s_i\|_* \cdot \|v_i - x^{*}\| \\
\\
& \overset{\cref{InexactCondition}}{\leq} &
\gamma_0 \beta_d(x_0; x^*)
+ \sum\limits_{i = 1}^k \delta_i \|v_i - x^{*}\|
\quad \equiv \quad \alpha_k.
\ea
\eeq
In order to finish the proof, it is enough to bound from
above the value $\alpha_k$, for which we have the
following recurrence:
$$
\ba{rcl}
\alpha_k & = & \alpha_{k - 1} + \delta_k \|v_k - x^{*}\|
\quad \overset{\cref{RecurrBound}}{\leq} \quad
\alpha_{k - 1} +
{\delta_k  }
\left(
{ p + 1 \over \gamma_k \sigma_{p + 1}(d)
} \right)^{1 \over p + 1}
\alpha_{k}^{1 \over p + 1}.
\ea
$$
Dividing both sides by $\alpha_k^{1 \over p + 1}$ and
using the monotonicity of this sequence, we get
$$
\ba{rcl}
\alpha_{k}^{p \over p + 1}
& \leq &
\frac{\alpha_{k - 1}}{\alpha_k^{1 / (p + 1) }}
+ {\delta_k}
\left(
{ p + 1 \over \gamma_k \sigma_{p + 1}(d)  }
\right)^{1 \over p + 1}
\quad \leq \quad
\alpha_{k - 1}^{p \over p + 1}
+ \delta_k
\left(
{ p + 1 \over \gamma_k \sigma_{p + 1}(d)  }
\right)^{1 \over p + 1}.
\ea
$$
Finally, from the last inequality we obtain
$$
\ba{rcl}
\alpha_k
& \leq &
\left(
\alpha_0^{p \over p + 1}
+ \left( {p + 1 \over \sigma_{p + 1}(d) } \right)^{1 \over p + 1}
\sum\limits_{i = 1}^k {\delta_i \over \gamma_i^{1 / (p + 1)}}
\right)^{p + 1 \over p},
\ea
$$
which is the right-hand side of~\cref{InexactGuarantee}.
\end{proof}

We see that accuracies $\delta_k$ for subgradients of the
subproblems appear in~\cref{RkDef} in an additive form,
weighted by the coefficients $\gamma_k^{-{1 \over p +
		1}}$. They should be chosen in a way making the right-hand
side of~\cref{InexactGuarantee} small enough. Let us
consider the simplest case, when all $\delta_k$ are the
same.

\begin{corollary} \label{ChooseAccuracies}
	Let $\delta_k = \delta>0$ for all $k \geq 1$. Assume that
	the coefficients $A_k$ grow \normalfont{sublinearly}:
	\beq \label{AkGrowth}
	\ba{rcl}
	A_k & \geq & c k^{p + 1}, \quad k \geq 1,
	\ea
	\eeq
	with some constant $c > 0$. Then for every
	\beq \label{DeltaChoice1}
	\ba{rcl}
	k & \geq & \left(
	{ \gamma_0 \beta_{d}(x_0; x_{*})  \over c \varepsilon}
	\right)^{\frac{1}{p + 1}} 2^{\frac{1}{p}}
	\qquad \text{and} \qquad
	\delta \; \; \leq \; \;
	\frac{(c\varepsilon)^{\frac{p}{p + 1}} }{ 2 }
	\left({ \gamma_0 \sigma_{p + 1}(d) \over p + 1 }\right)^{1 \over p + 1}
	\ea
	\eeq
	we have
	\beq \label{DeltaChoiceRes1}
	\ba{rcl}
	R_k(p, \delta) & \leq & \varepsilon A_k.
	\ea
	\eeq
	Consequently, by~\cref{InexactGuarantee} we have $ F(x_k)
	- F^{*} \leq \varepsilon. $
\end{corollary}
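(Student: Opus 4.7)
The plan is to bound $R_k(p, \delta)$ by splitting the expression inside the outer $(\,\cdot\,)^{(p+1)/p}$ power into its two summands and budgeting each against half of $(\varepsilon A_k)^{p/(p+1)}$.

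First, I would use $\gamma_i \geq \gamma_0$, which holds by step~6 of \cref{MainAlgorithm} and the assumption $\sigma_d(\psi)\geq 0$, to simplify the sum in \cref{RkDef}: since all $\delta_i$ equal $\delta$, $\sum_{i=1}^k \delta_i / \gamma_i^{1/(p+1)} \leq k\delta / \gamma_0^{1/(p+1)}$. This replaces the sum by a clean product of $k$ and $\delta$ and makes the bound monotone in both. Next I would observe that $R_k(p,\delta) \leq \varepsilon A_k$ certainly holds whenever each of the two summands inside the outer power is at most $\tfrac12 (\varepsilon A_k)^{p/(p+1)}$, and I would treat the two conditions in turn.

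For the initial term $(\gamma_0 \beta_d(x_0;x^*))^{p/(p+1)}$, the required inequality rearranges to $A_k \geq 2^{(p+1)/p} \gamma_0 \beta_d(x_0;x^*)/\varepsilon$; combining with the sublinear growth hypothesis $A_k \geq c k^{p+1}$ from \cref{AkGrowth} immediately recovers the stated lower bound $k \geq 2^{1/p}\bigl(\gamma_0 \beta_d(x_0;x^*)/(c\varepsilon)\bigr)^{1/(p+1)}$. For the error term $k\delta \bigl((p+1)/(\sigma_{p+1}(d)\gamma_0)\bigr)^{1/(p+1)}$, I would lower-bound $(\varepsilon A_k)^{p/(p+1)} \geq (c\varepsilon)^{p/(p+1)} k^p$ using \cref{AkGrowth}, cancel one factor of $k$, and note that the residual factor $k^{p-1}$ is at least $1$ for all $p \geq 1$ and $k \geq 1$. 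What remains is precisely the hypothesis \cref{DeltaChoice1} on $\delta$, so the second condition is automatic.

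Finally, the concluding bound $F(x_k) - F^* \leq \varepsilon$ follows from \cref{DeltaChoiceRes1} by plugging into inequality \cref{InexactGuarantee} of \cref{TheoremGlobal} and discarding the nonnegative Bregman terms on the left-hand side. The only delicate point I anticipate is bookkeeping the exponents $p/(p+1)$ and the factor $2^{1/p}$ correctly between the two halves of the split; otherwise the proof is a direct substitution into \cref{RkDef} with no genuine obstacle.
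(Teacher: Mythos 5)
Your proposal is correct and follows essentially the same route as the paper's proof: bound the sum via $\gamma_i \geq \gamma_0$ by $k\delta/\gamma_0^{1/(p+1)}$, split the bracket in \cref{RkDef} into two terms each budgeted against $\tfrac12(\varepsilon A_k)^{p/(p+1)}$, and invoke \cref{AkGrowth} together with $k^{p-1}\geq 1$. The only (cosmetic) difference is that the paper first divides through by $A_k^{p/(p+1)}$ and compares each term to $\varepsilon^{p/(p+1)}/2$.
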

\begin{proof}
Indeed,
$$
\ba{rcl}
\left(
{ \gamma_0 \beta_d(x_0; x^{*}) \over A_k }
\right)^{\frac{p}{p + 1}}
& \overset{\cref{AkGrowth}}{\leq} &
\left(
\frac{\gamma_0 \beta_d(x_0; x^{*})}{c}
\right)^{p \over p + 1} k^{p}
\; \overset{\cref{DeltaChoice1}}{\leq}
\; \frac{\varepsilon^{\frac{p}{p + 1}}}{2}
\ea
$$
and
$$
\ba{rcl}
\frac{
	\left(
	{ p + 1 \over \sigma_{p + 1}(d) }
	\right)^{\frac{1}{p + 1}} }
{A_k^{p \over p + 1}}
\sum\limits_{i = 1}^k
{\delta_i \over \gamma_i^{1 / (p + 1)}}
& \leq &
\frac{
	\left(
	{ p + 1 \over \gamma_0 \sigma_{p + 1}(d) }
	\right)^{\frac{1}{p + 1}} k \delta }
{A_k^{p \over p + 1}}
\; \overset{\cref{AkGrowth}}{\leq} \;
\frac{
	\left(
	{ p + 1 \over \gamma_0 \sigma_{p + 1}(d) }
	\right)^{\frac{1}{p + 1}}  \delta}
{c^{p \over p + 1} k^{p + 1} } \\
\\
& \leq &
\frac{
	\left(
	{ p + 1 \over \gamma_0 \sigma_{p + 1}(d) }
	\right)^{\frac{1}{p + 1}} \delta  }
{c^{p \over p + 1} }
\; \overset{\cref{DeltaChoice1}}{\leq} \;
\frac{\varepsilon^{\frac{p}{p + 1}}}{2}.
\ea
$$
Summing up these two inequalities
we obtain~\eqref{DeltaChoiceRes1}.
\end{proof}

\begin{corollary} \label{ChooseAccuraciesLinear}
	Let $\delta_k = \delta>0$ for all $k \geq 1$. Let the
	coefficients $A_k$ grow \normalfont{linearly}:
	\beq \label{LinearRateAk}
	\ba{rcl}
	A_k & \geq & A_1 \exp\bigl( \omega (k - 1) \bigr), \quad k
	\geq 1,
	\ea
	\eeq
	with some constant $0 < \omega \leq 1$ and initial $A_1 >
	0$. Then for every
	\beq \label{KChoice2}
	\ba{rcl}
	k & \geq & 1 + \frac{1}{\omega} \log\left(
	\frac{\gamma_0 \beta_d(x_0; x^{*} ) }{A_1 \varepsilon }
	2^{ (p + 1) / p}
	\right)
	\ea
	\eeq
	and
	\beq \label{DeltaChoice2}
	\ba{rcl}
	\delta & \leq &
	\frac{(A_1 \varepsilon)^{\frac{p}{p + 1}} \omega }{
		2  }
	\cdot
	\frac{p}{p + 1}
	\cdot
	\left(
	{ \gamma_0 \sigma_{p + 1}(d) \over p + 1 }
	\right)^{1 \over p + 1}
	\ea
	\eeq
	we have
	\beq \label{DeltaChoiceRes2}
	\ba{rcl}
	R_k(p, \delta) & \leq & \varepsilon A_k.
	\ea
	\eeq
	Consequently, by~\cref{InexactGuarantee} we have $ F(x_k)
	- F^{*} \leq  \varepsilon. $
\end{corollary}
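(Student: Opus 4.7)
The proof will parallel the argument of \cref{ChooseAccuracies}: I split the bound $R_k(p,\delta)^{p/(p+1)}$ into two pieces and show that each is at most $\tfrac{1}{2}(\varepsilon A_k)^{p/(p+1)}$. Equivalently, I need to verify
$$
\left(\gamma_0 \beta_d(x_0; x^*) \right)^{p/(p+1)} \;\leq\; \tfrac{1}{2}(\varepsilon A_k)^{p/(p+1)}
$$
and
$$
\left(\tfrac{p+1}{\sigma_{p+1}(d)}\right)^{1/(p+1)} \sum_{i=1}^{k}\tfrac{\delta}{\gamma_i^{1/(p+1)}} \;\leq\; \tfrac{1}{2}(\varepsilon A_k)^{p/(p+1)}.
$$

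For the first inequality, I would substitute $A_k \geq A_1 \exp(\omega(k-1))$, take logarithms, and solve for $k$. This yields precisely the condition~\cref{KChoice2}. For the second, using $\gamma_i \geq \gamma_0$ and $\delta_i = \delta$, the sum is bounded by $k\delta/\gamma_0^{1/(p+1)}$, so the task reduces to controlling the quantity $k/A_k^{p/(p+1)}$.

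The key step, which is where the linear-growth case differs qualitatively from the sublinear-growth case, is the uniform bound
$$
\max_{k \geq 1} \frac{k}{A_k^{p/(p+1)}} \;\leq\; \frac{1}{A_1^{p/(p+1)}} \cdot \max_{k \geq 1} k\, e^{-\mu(k-1)}, \qquad \mu := \tfrac{\omega p}{p+1}.
$$
Since $\omega \leq 1$ forces $\mu \leq 1$, the critical point $k = 1/\mu \geq 1$ lies in the feasible range, and a direct calculus computation gives $\max_{k \geq 1} k\, e^{-\mu k} = 1/(\mu e)$. Together with $e^{\mu-1} \leq 1$, this yields
$$
\frac{k}{A_k^{p/(p+1)}} \;\leq\; \frac{1}{A_1^{p/(p+1)}\, \mu} \;=\; \frac{p+1}{\omega\, p\, A_1^{p/(p+1)}}.
$$

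Substituting this into the second inequality and solving for $\delta$ reproduces exactly the bound~\cref{DeltaChoice2}. Adding the two halves gives $R_k(p,\delta)^{p/(p+1)} \leq (\varepsilon A_k)^{p/(p+1)}$, i.e.~\cref{DeltaChoiceRes2}, after which the functional guarantee $F(x_k)-F^* \leq \varepsilon$ follows immediately by dropping the nonnegative Bregman terms on the left of~\cref{InexactGuarantee} and dividing by $A_k$. The only step requiring some care is verifying that the maximum of $k \mapsto k/A_k^{p/(p+1)}$ over $k \geq 1$ is attained in the interior and equals $1/(\mu e A_1^{p/(p+1)})$; everything else is routine algebra analogous to \cref{ChooseAccuracies}.
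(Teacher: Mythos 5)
Your proposal is correct and follows essentially the same route as the paper: the same two-term split of $R_k(p,\delta)^{p/(p+1)}$, with the first term handled by \cref{KChoice2} and the second reduced to the uniform bound $k/A_k^{p/(p+1)} \leq \tfrac{p+1}{\omega p}A_1^{-p/(p+1)}$, which then yields \cref{DeltaChoice2}. The only (cosmetic) difference is that you establish $k\,e^{-\mu(k-1)} \leq 1/\mu$ by maximizing over real $k$ via calculus, whereas the paper gets the same bound more elementarily from $\exp(x) \geq 1+x$, giving $\frac{1+\mu(k-1)}{k} \geq \mu$ since $\mu \leq 1$.
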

\begin{proof}
Indeed,
$$
\ba{rcl}
\left({
	\gamma_0 \beta_d(x_0; x^{*}) \over A_k
}\right)^{p \over p + 1}
& \overset{\cref{LinearRateAk}}{\leq} &
\left({
	\gamma_0 \beta_d(x_0; x^*) \over
	A_1 \exp\bigl( \omega (k - 1) \bigr)
}\right)^{p \over p + 1}
\; \overset{\cref{DeltaChoice2}}{\leq} \;
\frac{\varepsilon^{p \over p + 1}}{2}.
\ea
$$
Now, note that the following inequality holds for all $x
\geq 0$:
\beq \label{ExpIneq}
\ba{rcl}
\exp(x) & \geq & 1 + x.
\ea
\eeq
Therefore,
\beq \label{AkExpIneq}
\ba{rcl}
\frac{A_k^{p \over p + 1}}{k}
& \overset{\cref{LinearRateAk}}{\geq} &
\frac{A_1^{p \over p + 1}
	\exp\left( \frac{p}{p + 1} \omega (k - 1) \right)}{k}
\; \overset{\cref{ExpIneq}}{\geq} \;
\frac{ A_1^{p \over p + 1}
	\Bigl( 1 + \frac{p}{p + 1}\omega(k - 1) \Bigr) }{ k }
\; > \;
\frac{p}{p + 1} A_1^{p \over p + 1} \omega.
\ea
\eeq
And we obtain
$$
\ba{rcl}
\frac{
	\left(
	{ p + 1 \over \sigma_{p + 1}(d) }
	\right)^{\frac{1}{p + 1}} }
{A_k^{p \over p + 1}}
\sum\limits_{i = 1}^k {\delta_i \over \gamma_i^{1 / (p + 1)}}
& \leq &
\frac{
	\left(
	{ p + 1 \over \gamma_0 \sigma_{p + 1}(d) }
	\right)^{\frac{1}{p + 1}} k \delta }
{A_k^{p \over p + 1}}
\;\; \overset{\cref{AkExpIneq}}{<} \;\;
\frac{
	\left(
	{ p + 1 \over \gamma_0 \sigma_{p + 1}(d)
	}\right)^{\frac{1}{p + 1}} p + 1 \delta}{
	A_1^{p \over p + 1} p \omega} \\
\\
& \overset{\cref{DeltaChoice2}}{\leq} &
\frac{\varepsilon^{p \over p + 1}}{2}.
\ea
$$
\,
\end{proof}

Estimates~\cref{DeltaChoice1} and~\eqref{DeltaChoice2}
show that the bound for the inner accuracy $\delta$ has a
reasonable dependency on the absolute accuracy
$\varepsilon$ required for the initial
problem~\cref{MainProblem}. Thus, in both cases, in
step~4 of the algorithm we need to find a point $v_{k+1}$
with subgradient $s \in \partial h_{k + 1}(v_{k + 1})$:
$$
\ba{rcl}
\|s\|_{*} \; \leq \; O\Bigl(\varepsilon^{p \over p +
	1}\Bigr) & \Leftrightarrow & \|s\|_{*}^{p + 1 \over p} \;
\leq \; O\bigl( \varepsilon \bigr).
\ea
$$
This is a reachable goal, especially for methods
minimizing $h_{k + 1}(\cdot)$ with a linear rate of
convergence.

In practice, it may be reasonable not to use very small
inner accuracy on a first stage but to decrease it over the iterations.
Then, the following simple choice of $\{ \delta_k \}_{k \geq 0}$ can work.

\begin{corollary} \label{ChooseAccuraciesDecreasing}
	Let us define $\delta_k \equiv \frac{c}{k^s}$ with fixed
	absolute constants $c > 0$ and $s > 1$. Then,
	$$
	\ba{rcl}
	\sum\limits_{i = 1}^k \delta_i & = &
	c\Bigl(1 + \sum\limits_{i = 2}^k \frac{1}{i^s} \Bigr)
	\; \leq \; c\Bigl(1 + \int\limits_{1}^{+\infty} \frac{dx}{x^s} \Bigr)
	\; = \; \frac{cs}{s - 1}.
	\ea
	$$
	Therefore, we have
	$$
	\ba{rcl}
	R_k(p, \delta) & \leq &
	\left( \bigl( \gamma_0 \beta_d(x_0; x^{*}) \bigr)^{p \over p + 1}
	+ \Bigl( \frac{p + 1}{\gamma_0 \sigma_{p + 1}(d)} \Bigr)^{1 \over p + 1}
	\frac{cs}{s - 1}
	\right)^{\frac{p + 1}{p}}.
	\ea
	$$
\end{corollary}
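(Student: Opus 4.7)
The plan is to show that when $\delta_k = c/k^s$ with $s>1$, the tail of the weighted sum appearing inside $R_k(p,\delta)$ is uniformly bounded in $k$, so that $R_k$ stays bounded by an explicit constant independent of $k$. The proof has two ingredients: an integral comparison for the series $\sum_i 1/i^s$, and the fact that $\gamma_i$ is nondecreasing, which lets us replace $\gamma_i^{-1/(p+1)}$ by $\gamma_0^{-1/(p+1)}$ inside the sum.

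First I would handle the series bound stated in the corollary. Since $x\mapsto 1/x^s$ is positive and decreasing on $[1,\infty)$, the standard integral comparison gives
\[
\sum_{i=2}^k \frac{1}{i^s} \;\leq\; \int_1^{+\infty}\frac{dx}{x^s} \;=\; \frac{1}{s-1},
\]
so that $\sum_{i=1}^k \delta_i = c\bigl(1+\sum_{i=2}^k i^{-s}\bigr) \leq c\bigl(1+\tfrac{1}{s-1}\bigr) = \tfrac{cs}{s-1}$. This is the first displayed inequality of the statement.

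Next I plug this into the definition~\cref{RkDef} of $R_k(p,\delta)$. Step~6 of \cref{MainAlgorithm} shows $\gamma_{k+1} = \gamma_k + a_{k+1}\sigma_d(\psi) \geq \gamma_k$, so by induction $\gamma_i \geq \gamma_0$ for all $i\geq 0$; hence
\[
\sum_{i=1}^k \frac{\delta_i}{\gamma_i^{1/(p+1)}} \;\leq\; \frac{1}{\gamma_0^{1/(p+1)}}\sum_{i=1}^k \delta_i \;\leq\; \frac{1}{\gamma_0^{1/(p+1)}}\cdot \frac{cs}{s-1}.
\]
Combining the constant factor $\bigl((p+1)/\sigma_{p+1}(d)\bigr)^{1/(p+1)}$ with $\gamma_0^{-1/(p+1)}$ yields exactly $\bigl((p+1)/(\gamma_0\sigma_{p+1}(d))\bigr)^{1/(p+1)}$, and substituting into~\cref{RkDef} gives the stated upper bound on $R_k(p,\delta)$.

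There is no real obstacle here; the argument is just a two-line calculation using monotonicity of the prox-coefficients and the $p$-series integral test. The only place one has to be slightly careful is to carry the factor $\gamma_0^{-1/(p+1)}$ inside the bracket $\bigl((p+1)/\sigma_{p+1}(d)\bigr)^{1/(p+1)}$ correctly so as to match the form $\bigl((p+1)/(\gamma_0\sigma_{p+1}(d))\bigr)^{1/(p+1)}$ displayed in the conclusion.
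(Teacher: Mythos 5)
Your proposal is correct and matches the paper's argument, which is essentially contained inline in the corollary statement itself: the integral comparison for the $p$-series and the monotonicity $\gamma_i \geq \gamma_0$ used to pull $\gamma_0^{-1/(p+1)}$ out of the sum in~\cref{RkDef} are exactly the two steps intended. No issues.
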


%%%%%%%%%%%%%%%%%%%%%%%%%%%%%%%%
%%%%%%%%%%%%%%%%%%%%%%%%%%%%%%%%
%%%%%%%%%%%%%%%%%%%%%%%%%%%%%%%%
\section{Application of tensor methods}
\label{SectionTensorMethods}\SetEQ
%%%%%%%%%%%%%%%%%%%%%%%%%%%%%%%%
%%%%%%%%%%%%%%%%%%%%%%%%%%%%%%%%
%%%%%%%%%%%%%%%%%%%%%%%%%%%%%%%%

In this section, let us incorporate the high-order tensor
methods~\cite{nesterov2018implementable} into~\cref{MainAlgorithm}
for solving the corresponding inner
subproblem~\cref{InnerSubproblem}. From now on, we
restrict our attention to Euclidean norms. Let us fix
symmetric positive-definite linear operator $B: \E \to
\E^{*}$ (notation $B = B^{*} \succ 0$) and use the
following norm for the primal space: $\|x\| \equiv \la Bx,
x \ra^{1/2}$, $x \in \E$. The norms for multilinear forms
on $\E$ are induced in the standard way (see~\cref{SectionNotation}).

\BAS \label{AssumptionLipschitz}
For fixed $p \geq 1$, the $p$-th derivative of the smooth
component of the objective function is Lipschitz
continuous:
\beq \label{LipschitzDef}
\ba{rcl}
\| D^p f(x) - D^p f(y) \| & \leq & L_p(f)\|x - y\|, \quad x, y \in \dom f,
\ea
\eeq
with some constant $0 < L_p(f) < +\infty$.
\EAS

For this setup, we use the following simple prox function:
\beq \label{ProxChoice}
\ba{rcl}
d(x) & \equiv & \frac{1}{p + 1}\|x - x_0\|^{p + 1}.
\ea
\eeq
Thus, the choice of prox function~\cref{ProxChoice} is
strictly related to the preferable degree $p \geq 1$ of
smoothness of function $f$.

Let us define the Taylor approximation $\Omega_p(f, x; y)$
of function~$f$ around the point $x \in \dom f$:
$$
\ba{rcl}
\Omega_p(f, x; y) & \Def & f(x) + \sum\limits_{i = 1}^p
\frac{1}{i!} D^i f(x)[y - x]^i.
\ea
$$
By \cref{AssumptionLipschitz}, we are able to
bound its accuracy in the following way: for all $x, y \in
\dom f$ it holds that
\beq \label{LipF}
\ba{rcl}
|f(y) - \Omega_p(f, x; y)|
& \leq & \frac{L_p(f)}{(p + 1)!}\|y - x\|^{p + 1},
\ea
\eeq
\beq \label{LipG}
\ba{rcl}
\| \nabla f(y) - \nabla_{\!y} \, \Omega_p(f, x; y) \|_{*}
& \leq & \frac{L_p(f)}{p!}\|y - x\|^p.
\ea
\eeq

Let us look at our regularized objective~$h_{k +
	1}(\cdot)$, which needs to be minimized at every step $k
\geq 0$:
\beq \label{SubproblemObjective}
\ba{rcl}
h_{k + 1}(x) & = &
\underbrace{A_{k + 1}f\left({ a_{k + 1}x + A_k x_k \over A_{k + 1} }\right)
}_{ \Def g_{k + 1}(x) }
\; + \;
\underbrace{
	a_{k + 1} \psi (x) + \gamma_{k} \beta_d(v_k; x)
}_{ \Def \phi_{k + 1}(x) }.
\ea
\eeq
This is a sum of two convex functions: smooth component
$g_{k + 1}$ and possibly nonsmooth but simple component
$\phi_{k + 1}$, which is strongly convex with respect to
$d$.

Let us drop unnecessary indices and consider the
subproblem in a general form:
\beq \label{SubproblemSimple}
\min_{x \in \dom h} \Bigl\{
h(x) \equiv g(x) + \phi(x)
\Bigr\}
\eeq
with $g$ having bounded Lipschitz constant for some $p
\geq 1$: $0 < L_p(g) < +\infty$. Since we assume the
objective to be strongly convex with respect to $d$
from~\cref{ProxChoice} with parameter $\sigma_{d}(h) >
0$, for every $x, y \in \dom h$ and all $h'(x) \in
\partial h(x)$ we have
\beq \label{UnifConv}
\ba{rcl}
h(y) - h(x) - \la h'(x), y - x \ra
& \geq & \sigma_{d}(h) \beta_d(x; y)
\; \overset{\cref{ProxUnifConv}}{\geq} \;
\frac{ \sigma_d(h) 2^{1 - p} }{p + 1} \| y - x \|^{p + 1}.
\ea
\eeq
Bound~\cref{LipF} motivates us to define the following
point,
\beq \label{TensorStep}
\ba{rcl}
T_{M}(h; x) & \Def &
\argmin\limits_{y \in \E}
\left\{
\Omega_p(g, x; y) + \frac{M}{(p + 1)!}\|y - x\|^{p + 1}
+ \phi(y)
\right\},
\ea
\eeq
and consider the following iteration process,
\beq \label{TensorIterations}
\boxed{
	\ba{rcl}
	z_{t + 1} & = &  T_M(h; z_t), \qquad t \geq 0.
	\ea
}
\eeq

For $p = 1$, the point \cref{TensorStep} is used in the
composite gradient method~\cite{nesterov2013gradient}. For
$p = 2$, this is a step of composite cubic
Newton~\cite{doikov2018randomized,
	grapiglia2019accelerated}. It can be shown that for $M
\geq p L_p(g)$ the auxiliary optimization problem
in~\cref{TensorStep} is convex for \textit{all} $p \geq
1$ (see Theorem~1 in~\cite{nesterov2018implementable}).
Therefore it can be efficiently solved by different
techniques of convex optimization and linear algebra (see
also~\cite{nesterov2006cubic,nesterov2018implementable}).

Let us mention some properties of point $T \equiv T_M(h;
x)$. Its characteristic condition is as follows:
$$
\ba{rcl}
\left\la
\nabla_{\!y} \Omega_p(g, x; T)
+ {M \over p!}\|T - x\|^{p - 1}B(T - x),
y - T \right\ra + \phi(y)
& \geq & \phi(T), \quad y \in \dom \phi.
\ea
$$
Therefore,
$$
\ba{rcl}
\phi'(T) & \Def & -\nabla_{\!y} \Omega_p(g, x; T) - {M
	\over p!}\|T - x\|^{p - 1}B(T - x) \;\; \in \;\; \partial
\phi(T).
\ea
$$
This inclusion justifies notation $h'(T) \Def \nabla g(T)
+ \phi'(T) \; \in \;
\partial h(T)$.

\newpage

In order to work with these objects, we use the following
result (see Lemma 2 in~\cite{doikov2019local}).

\BL \label{LemmaGradProgress}
Let $\beta \geq 1$ and $M = \beta L_p(g)$. Then
\beq\label{eq-DecFB}
\ba{rcl}
\la h'(T), x - T \ra & \geq & \left( {p! \over (p+1)L_p(g)}
\right)^{1 \over p} \cdot \| h'(T) \|_*^{p+1 \over p}
\cdot {(\beta^2 - 1)^{p-1 \over 2p} \over \beta} \cdot {p
	\over (p^2-1)^{p-1 \over 2p}}.
\ea
\eeq
In particular, if $\beta = p$, then
\beq\label{eq-DecF}
\ba{rcl}
\la h'(T), x - T \ra & \geq & \left( {p! \over (p+1)L_p(g)}
\right)^{1 \over p} \cdot \| h'(T) \|_*^{p+1 \over p}.
\ea
\eeq
\EL

The next lemma describes the global behavior of method
\cref{TensorIterations}.

\BL \label{LemmaGlobal}
Let $\beta \geq 1$ and $M =  \beta L_p(g).$ Then for any
$x, y \in \dom h$ we have
\beq \label{GlobalProgress}
\ba{rcl}
h(T_M(x))
& \leq &
h(y) + {(\beta + 1) L_p(g) \over (p + 1)!}\|y - x\|^{p + 1}.
\ea
\eeq
\EL
\begin{proof}
Indeed,
$$
\ba{rcl}
h(T_M(x)) & = & g(T_M(x)) + \phi(T_M(x)) \\
\\
& \overset{\cref{LipF}}{\leq} &
\Omega_p(g, x; T_M(x))
+ {M \over (p + 1)!}\|T_M(x) - x\|^{p + 1} + \phi(T_M(x)) \\
\\
& \overset{\cref{TensorStep}}{\leq} &
\Omega_p(g, x; y) + {M \over (p + 1)!}\|y - x\|^{p + 1} + \phi(y) \\
\\
& \overset{\cref{LipF}}{\leq} &
g(y) + {M + L_p(g) \over (p + 1)!}\|y - x\|^{p + 1} + \phi(y) \\
\\
& = &
h(y) + {(\beta + 1) L_p(g) \over (p + 1)!}\|y - x\|^{p + 1}.
\ea
$$
\,
\end{proof}

Now, we are ready to prove a convergence result on the
iteration process~\cref{TensorIterations}.

\BT[convergence of tensor method]
Let $M = pL_p(g)$. Then, for every $t \geq 0$ and $y \in
\dom h$ we have
\beq \label{TensorConvergence}
\ba{rll}
\|h'(z_{t + 2})\|_{*}^{p + 1 \over p}
\quad \leq \quad
& \exp\left( - t \cdot \min\left\{
1, \left[
{ p! \, \sigma_{d}(h) 2^{1 - p} \over (p + 1)L_p(g) }
\right]^{1 \over p}
\right\} \cdot \frac{p}{p + 1} \right) \\
\\
& \cdot \left(
(p + 1) L_p(g) \over p!
\right)^{1 \over p}
\cdot
\left(
h(y) - h^{*} + \frac{L_p(g)}{p!}\|y - z_0\|^{p + 1}
\right).
\ea
\eeq
\ET
\begin{proof}
Let us consider the point $z_{t + 1} = T_M(z_t)$.
By~\cref{GlobalProgress}, we have
\beq \label{hOneStep}
\ba{rcl}
h(z_{t + 1}) & \leq & h(y) + {L_p(g) \over p!} \|y - z_t\|^{p + 1}
\ea
\eeq
for any $y \in \dom h$.

\newpage
Denote $x_h^{*} \Def \argmin_{y \in \E}{h(y)}$ and consider
$y = z_t + \alpha(x_h^{*} - z_t)$ for $\alpha \in [0, 1]$.
Then we have
\beq  \label{GlobalAlpha}
\ba{rcl}
h(z_{t + 1}) - h^{*}
& \leq &
h(z_t) - h^{*} - \alpha \left( h(z_t) - h^{*} \right)
+ \alpha^{p + 1} { L_p(g) \over p! }
\|x^{*}_h - z_t\|^{p + 1} \\
\\
& \stackrel{\cref{UnifConv}}{\leq} &
\left(
1 - \alpha + \alpha^{p + 1}
{ (p + 1)L_p(g) \over p! \, \sigma_{d}(h) 2^{1 - p} }
\right)
\cdot \left( h(z_t) - h^{*} \right).
\ea
\eeq
The minimum of the right-hand side is attained at
$$
\ba{rcl}
\alpha^{*} & = & \min\left\{1,
\left[
{  p! \, \sigma_{d}(h) 2^{1 - p} \over (p + 1) L_p(g)   }
\right]^{1 \over p}  \right\}.
\ea
$$
Plugging it into~\cref{GlobalAlpha} gives
\beq \label{OneStepLinear}
\ba{rcl}
h(z_{t + 1}) - h^{*} & \leq &
\left(  1 - \alpha^{*} {p \over p + 1} \right)
\cdot \left( h(z_t) - h^{*} \right) \\
\\
& \leq &
\exp\left( -\alpha^{*} {p \over p + 1} \right)
\cdot (h(z_t) - h^{*}).
\ea
\eeq
Therefore, for every $t \geq 0$ we have
$$
\ba{rcl}
h(z_{t + 1}) - h^{*} & \overset{\cref{OneStepLinear}}{\leq} &
\exp\left( -t \alpha^* {p \over p + 1} \right)
\cdot \left( h(z_1) - h^{*} \right) \\
\\
& \overset{\cref{hOneStep}}{\leq} &
\exp\left( -t \alpha^* {p \over p + 1} \right)
\cdot \left( h(y) - h^{*} + {L_p(g) \over p!}\|y - z_0\|^{p + 1} \right)
\ea
$$
for every $y \in \dom h$. It remains to use~\cref{LemmaGradProgress} 
and finish the proof:
$$
\ba{rcl}
h(z_{t + 1}) - h^{*} & \geq &
h(z_{t + 1}) - h(z_{t + 2}) \\
\\
& \geq &
\la h'(z_{t + 2}), z_{t + 1} - z_{t + 2} \ra \\
\\
& \overset{\eqref{eq-DecF}}{\geq} &
\left(
{ p! \over (p + 1) L_p(g) }
\right)^{1 \over p} \cdot \| h'(z_{t + 2}) \|_{*}^{p + 1 \over p}.
\ea
$$
\,
\end{proof}

Thus, we can see that, applying tensor
method~\cref{TensorIterations} of degree $p \geq 1$ on
step~4 of the general contracting proximal method
(\cref{MainAlgorithm}), we obtain fast linear convergence for the
norms of subgradients. Hence, we can estimate the total
number of inner steps $t_k$ at iteration $k \geq 0$ as
follows.

\begin{corollary}
	Let us minimize function $h_{k + 1}(\cdot)$ by iterations,
	$$
	\ba{rcl}
	z_{t + 1} = T_M(h_{k + 1}; z_t), \quad t \geq 0,
	\ea
	$$
	using $M := pL_p(g_{k + 1})$ and $z_0 := v_k$. Then we have
	$$
	\ba{rcl}
	\| h_{k + 1}'(z_{t_k})\|_{*} & \leq & \delta_{k + 1}
	\ea
	$$
	\newpage
	\noindent
	for
	\beq \label{InnerItersBound}
	\ba{rcl}
	t_k & \geq & 2 +
	\max\left\{
	1, \, {\ell_{k + 1} \over \mu_{k + 1}}
	\right\}
	\cdot {p + 1 \over p}
	\cdot \log \left(
	{ \ell_{k + 1} \, D_{k + 1} \over \delta_{k + 1}^{p + 1 \over p} }
	\right),
	\ea
	\eeq
	where
	\beq \label{EllMuDef}
	\ba{rcccl}
	\ell_{k + 1} & \Def & \left(
	{ (p + 1)L_p(g_{k + 1}) \over p! }
	\right)^{1 \over p},
	\qquad
	\mu_{k + 1} & \Def & \left( \gamma_{k + 1} 2^{1 - p} \right)^{1 \over p},
	\ea
	\eeq
	and
	\beq \label{DkBound}
	\ba{rcl}
	D_{k + 1} & \Def &
	A_{k}(F(x_k) - F^{*}) + \gamma_k \beta_{d}(v_k; x^{*})
	+ \left({ \ell_{k + 1} \over \mu_{k + 1} }\right)^{p}
	\beta_d(v_k; x^{*}) \\
	\\
	& \overset{\cref{InexactGuarantee}}{\leq} &
	R_k(p, \delta)
	\cdot \biggl(1 +  \frac{1}{\gamma_0}
	\left({ \ell_{k + 1} \over \mu_{k + 1} }\right)^{p} \biggr).
	\ea
	\eeq
\end{corollary}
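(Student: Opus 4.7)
The plan is to apply \cref{TensorConvergence} to the subproblem objective $h_{k+1}$, with smooth part $g_{k+1}$, starting point $z_0 = v_k$, and regularization $M = p L_p(g_{k+1})$. Using $\sigma_d(h_{k+1}) \geq \gamma_{k+1}$ together with the identities $\ell_{k+1}^p = (p+1) L_p(g_{k+1})/p!$ and $\mu_{k+1}^p = \gamma_{k+1} 2^{1-p}$, the prefactor in \cref{TensorConvergence} becomes exactly $\ell_{k+1}$ and the argument of $\exp(\cdot)$ becomes $\min\{1, \mu_{k+1}/\ell_{k+1}\} \cdot p/(p+1)$. Setting $t = t_k - 2$ and requiring $\|h_{k+1}'(z_{t_k})\|_*^{(p+1)/p} \leq \delta_{k+1}^{(p+1)/p}$, I would take logarithms to obtain exactly the form \cref{InnerItersBound}, provided the quantity $E_{k+1}(y) \Def h_{k+1}(y) - h_{k+1}^* + \frac{L_p(g_{k+1})}{p!}\|y - v_k\|^{p+1}$ can be controlled by $D_{k+1}$ for a convenient $y$.

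Taking $y = x^*$, I would bound the two pieces of $E_{k+1}(x^*)$ separately. For the functional gap, convexity of $f$ on the contracted argument $(a_{k+1} x^* + A_k x_k)/A_{k+1}$ gives the upper bound $h_{k+1}(x^*) \leq a_{k+1} F^* + A_k f(x_k) + \gamma_k \beta_d(v_k; x^*)$. Combining this with the lower bound $h_{k+1}^* \geq A_{k+1} F^* - A_k \psi(x_k)$, which is exactly the chain from \cref{TriangleAnalysis} evaluated at the exact minimizer $T$ of $h_{k+1}$ (using convexity of $\psi$ on the same contracted argument), yields $h_{k+1}(x^*) - h_{k+1}^* \leq A_k (F(x_k) - F^*) + \gamma_k \beta_d(v_k; x^*)$. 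For the distance piece, \cref{ProxUnifConv} gives $\|x^* - v_k\|^{p+1} \leq (p+1) 2^{p-1} \beta_d(v_k; x^*)$; substituting and applying the identities for $\ell_{k+1}$ and $\mu_{k+1}$ rewrites the resulting prefactor of $\beta_d(v_k; x^*)$ as $(\ell_{k+1}/\mu_{k+1})^p$ after the $\gamma_{k+1}$ bookkeeping. Summing the two estimates gives $E_{k+1}(x^*) \leq D_{k+1}$.

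For the second inequality in \cref{DkBound}, \cref{InexactGuarantee} directly bounds $A_k(F(x_k) - F^*) + \gamma_k \beta_d(v_k; x^*) \leq R_k(p, \delta)$, and since $\gamma_k \geq \gamma_0$ it also gives $\beta_d(v_k; x^*) \leq R_k(p, \delta)/\gamma_0$. Plugging these into the definition of $D_{k+1}$ produces the factor $1 + (\ell_{k+1}/\mu_{k+1})^p/\gamma_0$ multiplying $R_k(p, \delta)$, which is the claimed estimate.

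The main obstacle is the constant bookkeeping in the second step: one must carefully match the coefficient of $\beta_d(v_k; x^*)$ produced by uniform convexity of $d$ against $(\ell_{k+1}/\mu_{k+1})^p$, tracking the interplay of the powers of $2$, the factor $(p+1)/p!$, and the strong convexity modulus $\gamma_{k+1}$ through $\mu_{k+1}^p = \gamma_{k+1} 2^{1-p}$. Everything else is routine substitution and inversion of the exponential inequality.
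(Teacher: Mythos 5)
Your proposal follows essentially the same route as the paper's proof: apply the tensor-method convergence bound \cref{TensorConvergence} with $\sigma_d(h_{k+1}) \geq \gamma_{k+1}$, control $h_{k+1}(x^*) - h_{k+1}^*$ by combining the convexity upper bound on $h_{k+1}(x^*)$ with the lower bound $h_{k+1}^* + A_k\psi(x_k) \geq A_{k+1}F^*$, handle the distance term via \cref{ProxUnifConv}, and invert the exponential. The one step you flag as delicate --- matching the prefactor of $\beta_d(v_k;x^*)$ against $\left(\ell_{k+1}/\mu_{k+1}\right)^p$, which carries a residual factor of $\gamma_{k+1}$ --- is treated exactly as tersely in the paper itself, so your argument is a faithful reconstruction.
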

\begin{proof}
By definition, for all $x \in \dom \psi$, we have
$$
\ba{cl}
& h_{k + 1}(x) + A_k \psi(x_k) \\
\\
& = \quad
A_{k + 1}f\bigl({ a_{k + 1}x + A_k x_k \over A_{k + 1}  }\bigr)
+ a_{k + 1}\psi(x) + \gamma_k \beta_d(v_k; x) + A_k \psi(x_k) \\
\\
& \geq \quad
A_{k + 1}F\bigl({ a_{k + 1}x + A_k x_k \over A_{k + 1} }\bigr)
+ \gamma_k \beta_d(v_k; x) \quad \geq \quad A_{k + 1} F^{*}.
\ea
$$
Therefore,
\beq \label{BoundHelper}
\ba{rcl}
-h^{*}_{k + 1} - A_k \psi(x_k) & \leq & - A_{k + 1}F^{*}.
\ea
\eeq
Then for $y \equiv x^{*} \Def \argmin_{y \in \E}F(y)$ we obtain
$$
\ba{rcl}
& & \mspace{-72mu} h_{k + 1}(y) - h_{k + 1}^{*}
+ \frac{L_p(g_{k + 1})}{p!} \|y - z_0\|^{p + 1} 
\, = \, 
h_{k + 1}(x^{*}) - h_{k + 1}^{*}
+ {L_p(g_{k + 1}) \over p!}\|x^{*} - v_k\|^{p + 1} \\
\\
& = &
A_{k + 1}f\bigl({ a_{k + 1}x^{*}
	+ A_k x_k \over A_{k + 1} }\bigr)
+ a_{k + 1} \psi(x^{*})
- h_{k + 1}^{*}
+ \gamma_k \beta_d(v_k; x^{*}) \\
\\
& \; & \quad + \quad  
{L_p(g_{k + 1}) \over p!}\|x^{*} - v_k\|^{p + 1} \\
\\
& \leq & a_{k + 1} F^{*} + A_k F(x_k)
- h_{k + 1}^{*} - A_k \psi(x_k)
+ \gamma_k \beta_d(v_k; x^{*}) \\
\\
& \; & \quad + \quad
{L_p(g_{k + 1}) \over p!}\|x^{*} - v_k\|^{p + 1}\\
\\
& \overset{\cref{BoundHelper}}{\leq} & A_k( F(x_k) - F^{*} )
+ \gamma_k \beta_d(v_k; x^{*})
+ {L_p(g_{k + 1}) \over p!}\|x^{*} - v_k\|^{p + 1}
\; \overset{\cref{ProxUnifConv}}{\leq} \; D_{k + 1}.
\ea
$$
It remains to use this bound together with~\cref{TensorConvergence} and
the following estimation of the strong convexity parameter:
$
\sigma_d(h_{k + 1}) \refGE{GammaKDef}  \gamma_{k + 1}.
$
\end{proof}

By representation~\cref{SubproblemObjective}, we have a
simple relations between Lipschitz constants of the
derivatives for function $g_{k+1}(\cdot)$ and $f(\cdot)$:
\beq \label{LipReg}
\ba{rcl}
L_p(g_{k + 1}) & = & \frac{a_{k + 1}^{p + 1}}{A_{k + 1}^p}
L_p(f), \quad p \geq 1.
\ea
\eeq

\newpage
\noindent
Therefore, we can control the condition number of our
objective. Indeed, by~\cref{InnerItersBound}, the main
complexity factor in the minimization process for $h_{k +
	1}(\cdot)$ is the ratio
$$
\ba{rcl}
{\ell_{k + 1} \over \mu_{k + 1}} & \equiv &
\Bigl({
	(p + 1) L_p(g_{k + 1}) \over p! \,  2^{1 - p} \gamma_{k + 1}
}\Bigr)^{1 \over p}
\; \stackrel{\cref{LipReg},\cref{GammaKDef}}{=} \;
\Bigl({
	(p + 1) 2^{p - 1} a_{k + 1}^{p + 1} L_p(f) \over
	p! \, A_{k + 1}^p (\gamma_0 + A_{k + 1} \sigma_d(\psi))
}\Bigr)^{1 \over p}.
\ea
$$
We are able to keep this ratio small by applying an
appropriate growth strategy for coefficients $A_k$.

Let us consider two cases:
$\sigma_d(\psi) = 0$ and $\sigma_d(\psi) > 0$.
\begin{enumerate}
	
\item  $\sigma_d(\psi) = 0$.
	Let us choose $c
	\equiv
	\frac{p! \, \gamma_0}{2^{p - 1} (p + 1)^{p + 2} L_p(f) }$
	and $a_k \equiv c(p + 1) k^p$.
	Then we have
	$$
	\ba{rcl}
	A_k & = & c(p + 1) \sum\limits_{i = 1}^k i^p \; \geq \;
	c(p + 1) \int\limits_{0}^k x^p dx \; = \; ck^{p + 1},
	\ea
	$$
	and we get
	\beq \label{akFracBound}
	\ba{rcl}
	\frac{a_{k + 1}^{p + 1}}{A_{k + 1}^p} & \leq &
	c (p + 1)^{p + 1}
	\; = \;
	{ p! \, \gamma_0 \over 2^{p - 1} (p + 1) L_p(f) }.
	\ea
	\eeq
	Thus we obtain
	\beq \label{LmuBound}
	\ba{rcl}
	{\ell_{k + 1} \over \mu_{k + 1}} & = &
	\Bigl(
	\frac{a_{k + 1}^{p + 1}}{A_{k + 1}^p} \cdot
	\frac{  2^{p - 1} (p + 1) L_p(f)}{p! \, \gamma_0}
	\Bigr)^{1 \over p}
	\; \stackrel{\cref{akFracBound}}{\leq} \; 1.
	\ea
	\eeq
	
\item  $\sigma_d(\psi) > 0$.
	For $k = 0$ we pick $a_1 \equiv c(p + 1)$
	as in the previous case. Now consider $k \geq 1$.
	Denote
	\beq \label{OmegaDef}
	\ba{rcl}
	\omega & \Def & \min\{ \Bigl(
	\frac{\sigma_d(\psi) p!}
	{L_p(f) (p + 1) 2^{p - 1}}
	\Bigr)^{1 \over p + 1}  , \frac{1}{2} \}
	\ea
	\eeq
	and choose $a_{k + 1}$ from the equation
	$$
	\ba{rcl}
	\frac{a_{k + 1}}{A_{k + 1}} & = &
	\frac{a_{k + 1}}{a_{k + 1} + A_k}
	\; = \; \omega
	\quad \Leftrightarrow \quad
	a_{k + 1} \; = \; \omega (1 - \omega)^{-1} A_k.
	\ea
	$$
	Therefore
	\beq \label{LmuBound2}
	\ba{rcl}
	\frac{\ell_{k + 1}}{\mu_{k + 1}} & \leq &
	\Bigl(
	\frac{a_{k + 1}^{p + 1}}{A_{k + 1}^{p + 1}}
	\cdot \frac{L_p(f) (p + 1) 2^{p - 1}}{p! \, \sigma_d(\psi)}
	\Bigr)^{1 \over p} \\
	\\
	& = & 
	\omega \cdot \Bigl(
	\frac{L_p(f) (p + 1) 2^{p - 1}}{p! \, \sigma_d(\psi)}
	\Bigr)^{1 \over p + 1}
	\; \leq \; 1.
	\ea
	\eeq
	
\end{enumerate}

Thus, in both cases, at every upper-level step we need to
perform a logarithmic number of iterations of the inner
method, multiplied by a small constant.

We are ready to specify the whole optimization procedure.

\vspace*{7pt}
\fbox{
	\hspace*{4pt}
	\centering
	\begin{minipage}{0.85\linewidth}
		\begin{algorithm}[H] 
			\renewcommand{\thealgorithm}{2}
			\caption{\textbf{Contracting Proximal Tensor Method}}
			\label{TensorAlgorithm}
			\noindent\makebox[\linewidth]{\rule{1.087\linewidth}{0.4pt}}
			\begin{algorithmic}[1]
				\vspace{5pt}
				\Require Choose $x_0 \in \dom F$,  inner accuracy $\delta > 0$, $\gamma_0 > 0$.
				\vspace*{5pt}
				\Statex Set $v_0 := x_0$, $A_0 := 0$.
				\vspace*{5pt}
				\Statex Fix
				$\; d(x) := \frac{1}{p + 1}\|x - x_0\|^{p + 1}$,
				\Statex $ \qquad c := \frac{p! \, \gamma_0}{2^{p - 1} (p + 1)^{p + 2} L_p(f)}$,
				$\quad  \omega := \min\{ \Bigl(
				\frac{\sigma_d(\psi) p!}
				{L_p(f) (p + 1) 2^{p - 1}}
				\Bigr)^{1 \over p + 1}  , \frac{1}{2} \} $.
				\vspace*{5pt}
				\Ensure $k \geq 0$. \vspace*{5pt}
				\State \textbf{If} $k = 0$ or $\omega = 0$
				\textbf{Then} \vspace*{5pt}
				\Statex
				$\qquad a_{k + 1} := c(p + 1) (k + 1)^{p}$. \vspace*{5pt}
				\Statex \textbf{Else}
				\Statex \vspace*{5pt}
				$\qquad a_{k + 1} := \omega (1 - \omega)^{-1} A_k$. \vspace*{5pt}
				\State Set $A_{k + 1} := A_k + a_{k + 1}$. \vspace*{5pt}
				\State Denote contracted objective with regularizer: \vspace*{5pt}
				\Statex
				$\qquad g_{k + 1}(x) :=
				A_{k + 1}f\bigl(\frac{a_{k + 1}x + A_k x_k}{A_{k + 1}} \bigr)$,
				\vspace*{5pt}
				\Statex
				$\qquad \phi_{k + 1}(x) := a_{k + 1}\psi(x) + \gamma_k \beta_{d}(v_k; x)$,
				\vspace*{7pt}
				\Statex
				$\qquad h_{k + 1}(x) := g_{k + 1}(x) + \phi_{k + 1}(x)$.
				\vspace*{7pt}
				
				\State Solve inner subproblem by tensor method up to accuracy $\delta$:
				\Statex $\qquad z_0 := v_k$,
				$\; t_{k} := 0$,
				$\; M := p L_p(f) \frac{a_{k + 1}^{p + 1}}{A_{k + 1}^p}$.
				\vspace*{5pt}
				\Statex $\qquad $\textbf{Do} $z_{t_{k} + 1} := T_{M}(h_{k + 1}, z_{t_k}), \; t_{k} := t_{k} + 1$
				\textbf{Until} $\| h'_{k + 1}(z_{t_k})\|_{*} \leq \delta$.
				\vspace*{-5pt}
				\Statex $\qquad v_{k + 1} := z_{t_k}$.
				\vspace*{5pt}
				\State Set $x_{k + 1} := \frac{a_{k + 1}v_{k + 1} + A_k x_k}{A_{k + 1}}$.
				\vspace*{5pt}
				\State Set $\gamma_{k + 1} := \gamma_k + a_{k + 1} \sigma_d(\psi)$.
				\vspace*{5pt}
			\end{algorithmic}
		\end{algorithm}
	\end{minipage}
	\hspace*{4pt}
}
\vspace*{10pt}

Let us present global complexity bounds for this method in
convex and strongly convex cases.

\BT[convex case] \label{TheoremFinal1}
Let for a given $\varepsilon > 0$, the inner accuracy
$\delta$ be fixed as follows:
$$
\ba{rcl}
\delta & = & \Bigl( \frac{p! \, \varepsilon}{L_p(f)}
\Bigr)^{p \over p + 1} \frac{\gamma_0}{2^p (p + 1)^{p +
		1}}.
\ea
$$
Then, in order to achieve $ F(x_K) - F^{*} \leq
\varepsilon $ it is enough to perform
\beq \label{ComplK}
\ba{rcl}
K & = &
\biggl\lfloor
1 +
2^{1 \over p}
\left({
	2^{p - 1} (p + 1)^{p + 2} L_p(f) \, \beta_d(x_0; x^{*}) \over
	\varepsilon \, p!
}\right)^{1 \over p + 1}
\biggr\rfloor
\ea
\eeq
iterations of \cref{TensorAlgorithm}. The total number of oracle
calls $N_K \Def \sum_{k= 1}^K t_k$ is bounded as
\beq \label{ComplTotal}
\ba{rcl}
N_K & \leq & K \cdot \biggl(
3 +
\frac{p + 1}{p} \log\Bigl(
4 \bigl(1 + \frac{1}{\gamma_0}\bigr) ( p + 1 )^{1 \over p}
K^p
\Bigr)
\biggr).
\ea
\eeq
\ET
\begin{proof}
Estimate~\cref{ComplK} follows directly
from~\cref{DeltaChoice1}, by substituting the value 
$$
\ba{rcl}
c & = &
\frac{p! \gamma_0}{2^{p - 1} (p + 1)^{p + 2} L_p(f)}.
\ea
$$
Now, let us prove~\cref{ComplTotal}.
By~\cref{InnerItersBound}, we have
$$
\ba{rcl}
t_k & \leq &
3 + \max\left\{1, \frac{\ell_{k + 1}}{\mu_{k + 1}}  \right\}
\cdot \frac{p + 1}{p} \cdot
\log \left({ \ell_{k + 1} D_{k + 1} \over \delta^{p + 1 \over p} }\right)  \\
\\
& \stackrel{\cref{LmuBound},\cref{DkBound}}{\leq} &
3 + \frac{p + 1}{p} \cdot \log \left({  \gamma_0^{1/p} (1 + \gamma_0^{-1})
	R_k(p, \delta) \over \delta^{p + 1 \over p}
}\right).
\ea
$$
In order to finish the proof, we need to bound the value
under the logarithm.

By the choice of $a_{k}$, we have an upper bound for
$A_k$:
\beq \label{AkUpBound}
\ba{rcl}
A_{k} & = & c(p + 1) \sum\limits_{i = 1}^k i^p
\; \leq \; c(p + 1) \int\limits_{0}^{k + 1} x^p dx
\; = \; c (k + 1)^{p + 1}.
\ea
\eeq
Therefore, for every $0 \leq k \leq K$:
$$
\ba{rcl}
\frac{R_k(p, \delta)}{\delta^{p + 1 \over p}}
& = &
\left(
\frac{(\gamma_0 \beta_d(x_0; x^{*}))^{p \over p + 1}}{\delta}
+ \Bigl(  \frac{(p + 1)2^{p - 1}}{\gamma_0} \Bigr)^{1 \over p + 1} k
\right)^{p + 1 \over p} \\
\\
& \leq &
\left(
\frac{(\gamma_0 \beta_d(x_0; x^{*}))^{p \over p + 1}}{\delta}
+ \Bigl(  \frac{(p + 1)2^{p - 1}}{\gamma_0} \Bigr)^{1 \over p + 1} K
\right)^{p + 1 \over p} \\
\\
& = &
\left(
\Bigl(
\frac{L_p(f) \beta_d(x_0; x^{*})}{p! \, \varepsilon}
\Bigr)^{p \over p + 1}
\frac{2^p (p + 1)^{p + 1}}{\gamma_0^{1 \over p + 1}}
+ \Bigl(  \frac{(p + 1)2^{p - 1}}{\gamma_0} \Bigr)^{1 \over p + 1} K
\right)^{p + 1 \over p} \\
\\
& \overset{\cref{ComplK}}{\leq} &
\left(
\Bigl( \frac{(p + 1)2^{p - 1}}{\gamma_0} \Bigr)^{1 \over p
	+ 1} (K^p + K) \right)^{p + 1 \over p} \; \leq \; \; 4
\Bigl( \frac{p + 1}{\gamma_0} \Bigr)^{1 \over p} K ^p.
\ea
$$
This completes the proof.
\end{proof}

Now, let us discuss the overall dependence of $\delta$ and $K$
on $p$, given by the claim of \cref{TheoremFinal1}.
For simplicity, we fix $\frac{L_p(f)}{\varepsilon} = 1$,
$\beta_d(x_0; x^{*}) = 1$, and $\gamma_0 = 1$. Thus, we observe the functions
\beq \label{f_dep}
\ba{rcl}
\delta(p) & := & \frac{(p!)^{\frac{p}{p + 1}} }{2^p (p + 1)^{p + 1}},
\qquad
K(p) \; := \; 1 + 2^{\frac{1}{p}}
\Bigl(  \frac{2^{p - 1} (p + 1)^{p + 2}}{p!}  \Bigr)^{\frac{1}{p + 1}}.
\ea
\eeq
One can see that $\log_2 \delta(p) \leq -p$.
Therefore, increasing the order of the method,
it requires at least to double the precision of solving the subproblem.
At the same time, we have (using Stirling's formula)
$$
\ba{rcl}
\lim\limits_{p \to +\infty} K(p)
& = &
1 + 2 \exp\Bigl( \lim\limits_{p \to +\infty} \frac{(p + 2) \log (p + 1) - \log p!}{p + 1}  \Bigr)
\;\; = \;\; 1 + 2 \exp(1).
\ea
$$
Hence, the value of $K(p)$ is bounded from above by an absolute constant.
The graphs of the dependence~\eqref{f_dep} are shown in \cref{fig_dependencies}.
Note that in practice, we are interested rather in small values of $p$.

\begin{figure}[ht]
	%\vskip 0.2in
	\begin{center}
		\includegraphics[width=0.35\columnwidth]{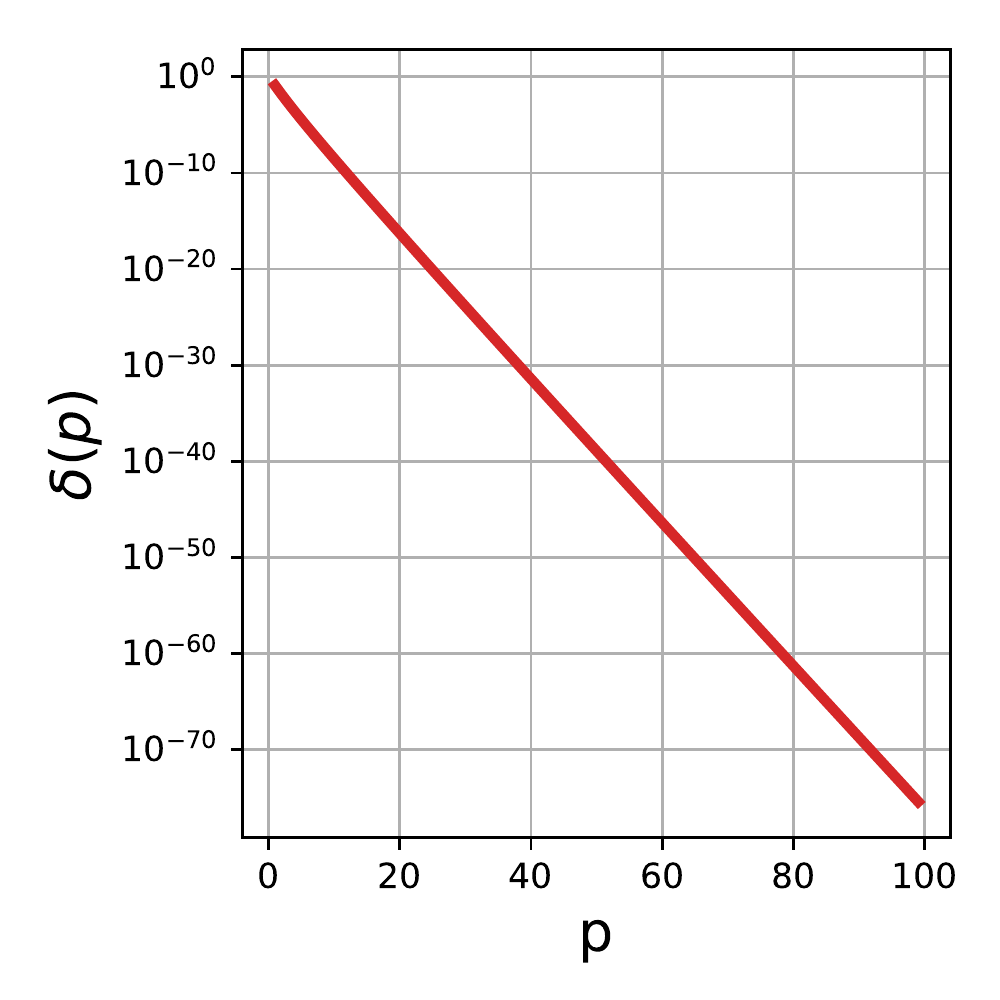}
		\includegraphics[width=0.35\columnwidth]{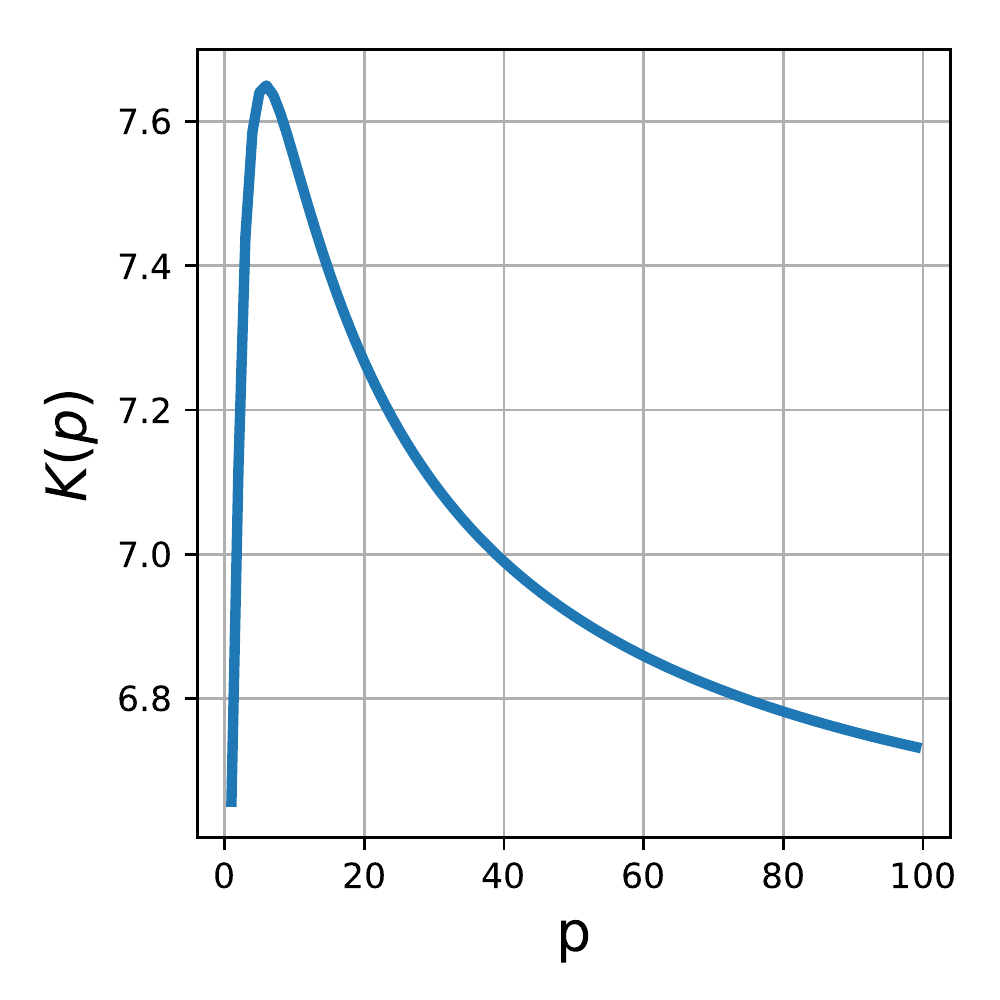}
		\caption{ 
		 The dependence of $\delta$ and $K$ on $p$,
	     while $\frac{L_p(f)}{\varepsilon}$ and $\beta_d(x_0; x^{*})$ are fixed.}
		\label{fig_dependencies}
	\end{center}
	\vskip -0.2in
\end{figure}

\BT[Strongly Convex Case] \label{TheoremFinal2}
Let $\sigma_d(\psi) > 0$ and condition number $\omega$ be
defined as in~\cref{OmegaDef}. Let for a given
$\varepsilon > 0$ the inner accuracy $\delta$ be fixed as
follows:
\beq \label{SConvDelta}
\ba{rcl}
\delta & = & \Bigl( \frac{p! \, \varepsilon}{ L_p(f) }
\Bigr)^{p \over p + 1} \frac{ \gamma_0 p \, \omega } {2^p
	(p + 1)^{ ( (p + 1)^2 + 1 ) / (p + 1) } }.
\ea
\eeq
Then, in order to achieve $ F(x_K) - F^{*} \leq
\varepsilon$, it is enough to perform
\beq \label{TheoremFinal2Iters}
\ba{rcl}
K & = & \left\lfloor
2 + \frac{1}{\omega}
\mathcal{L}
\right\rfloor
\ea
\eeq
iterations of \cref{TensorAlgorithm}, where
$$
\ba{rcl}
\mathcal{L} & \Def & \log\Bigl( \max\bigl\{
\frac{ (p + 1)^p }{\omega^{p + 1}},
\frac{ L_p(f) \beta_d(x_0; x^{*}) (p + 1)^{p + 1} 2^{p + \frac{1}{p}} }
{p! \, \varepsilon}
\bigr\}
\Bigr).
\ea
$$
The total number of oracle calls $N_K$ is bounded as
follows:
\beq \label{TheoremFinal2Oracles}
\ba{rc}
& \; N_K
\; \; \leq \; \;
K \cdot \biggl( 3 +
\bigl( 1 + \frac{e}{(e - 1)p} \bigr)
\cdot \bigl( 1 + \mathcal{L} \bigr) \\
\\
& \quad + \;
\log\Bigl(
\max\{1,
\Bigl(
\frac{4 \sigma_d(\psi) p!}{(p + 1) L_p(f)}
\Bigr)^{1 \over p}  \}
\cdot
\Bigl( 1 + \frac{1}{\gamma_0} \Bigr)
\cdot \frac{(p + 1)^{p + 2 \over p}}{p^{p + 1 \over p}} \cdot
2^{\frac{2p^2 + p + 4}{p}}
\Bigr)
\biggr).
\ea
\eeq
\ET
\begin{proof}
At every iteration $k \geq 1$, we have $A_{k + 1} = (1 -
\omega)^{-1} A_k \geq A_k \exp(\omega) $. At the same
time, we know that
\beq \label{OmegaBound}
\ba{rcl}
\omega & \leq & \frac{1}{2} \; \leq \; \frac{e - 1}{e},
\ea
\eeq
where $e = \exp(1)$. Since for all $\alpha \in [0, 1]$ it holds
that
$$
\ba{rcl}
1 - \frac{e - 1}{e}\alpha & \geq & \exp(-\alpha),
\ea
$$
taking $\alpha = \omega\frac{e}{e - 1} \overset{\cref{OmegaBound}}{\leq} 1$
we obtain
$A_{k + 1}  \leq A_k \exp\bigl(  \omega \frac{e}{e - 1} \bigr)$.
Therefore we have, for all $k \geq 0$,
\beq \label{AkBounds}
\ba{rcl}
A_1 \exp\bigl(k \omega\bigr) & \leq &
A_{k + 1} \; \leq \;
A_1 \exp\Bigl( k \omega  \frac{e}{e - 1} \Bigr).
\ea
\eeq
Now, estimate~\cref{TheoremFinal2Iters} follows directly from~\cref{AkBounds}
and~\cref{KChoice2}
by using the value
$A_1 = \frac{p! \, \gamma_0}{2^{p - 1} (p + 1)^{p + 1} L_p(f)}$.

By the choice of $a_{k + 1}$, we have $\frac{\ell_{k +
		1}}{\mu_{k + 1}} \overset{\cref{LmuBound2}}{\leq} 1$, and
we need only to estimate the value under the logarithm
in~\cref{InnerItersBound}. For every $0 \leq k \leq K$,
we have
$$
\ba{rcl}
\frac{\ell_{k + 1} D_{k + 1}}{\delta^{p + 1 \over p}}
& \stackrel{\cref{LmuBound2},\cref{DkBound}}{\leq} &
\frac{\mu_{k + 1} R_k(p, \delta)
	\left( 1 + \frac{1}{\gamma_0} \right)}{\delta^{p + 1 \over p}} \\
\\
& = &
(\gamma_0 + \sigma_d(\psi)A_{k + 1})^{\frac{1}{p}} 2^{\frac{1}{p} - 1}
\bigr( 1 + \frac{1}{\gamma_0} \bigl) \\
\\
& \; & \quad \cdot \quad
\left(
\frac{(\gamma_0 \beta_d(x_0; x^{*}))^{p \over p + 1}}{\delta}
+ \Bigl(  \frac{(p + 1)2^{p - 1}}{\gamma_0} \Bigr)^{1 \over p + 1} k
\right)^{p + 1 \over p} \\
\\
& \leq &
(\gamma_0 + \sigma_d(\psi)A_{K + 1})^{\frac{1}{p}} 2^{\frac{1}{p} - 1}
\bigr( 1 + \frac{1}{\gamma_0} \bigl) \\
\\
& \; & \quad \cdot \quad
\left(
\frac{(\gamma_0 \beta_d(x_0; x^{*}))^{p \over p + 1}}{\delta}
+ \Bigl(  \frac{(p + 1)2^{p - 1}}{\gamma_0} \Bigr)^{1 \over p + 1} K
\right)^{p + 1 \over p}.
\ea
$$
Let us estimate different terms in this expression
separately.
\begin{enumerate}
	\item By definition of $\omega$, we have
	\beq \label{OmegaBound2}
	\ba{rcl}
	\omega^{p + 1} & \leq & \frac{(p + 1)^p \sigma_d(\psi) A_1}{\gamma_0}.
	\ea
	\eeq
	Therefore,
	$$
	\ba{rcl}
	\gamma_0 + \sigma_d(\psi) A_{K + 1}
	& \overset{\cref{OmegaBound2},\cref{AkBounds}}{\leq} &
	\sigma_d(\psi) A_1 \biggl(
	\frac{(p + 1)^p}{\omega^{p + 1}} + \exp\Bigl( K \omega \frac{e}{e - 1} \Bigr)
	\biggr) \\
	\\
	& \overset{\cref{TheoremFinal2Iters}}{\leq} &
	2 \sigma_d(\psi) A_1 \exp\Bigl( K \omega \frac{e}{e - 1} \Bigr).
	\ea
	$$
	
	\item Substituting the value for $\delta$, we obtain
	$$
	\ba{rcl}
	\frac{( \gamma_0 \beta_d(x_0; x^{*}) )^{p \over p + 1}}{\delta}
	& \overset{\cref{SConvDelta}}{=} &
	\left(
	\frac{L_p(f) \beta_d(x_0; x^{*})}{p! \, \varepsilon}
	\right)^{p \over p + 1}
	\frac{2^p (p + 1)^{((p + 1)^2 + 1) / (p + 1)}}
	{p \, \omega \gamma_0^{1 \over p + 1 }} \\
	\\
	& \overset{\cref{TheoremFinal2Iters}}{\leq} &
	\frac{(p + 1)^2 2^{(2p^2 + p + 1) / (p + 1)} }
	{p \, \omega \gamma_0^{ \frac{1}{p + 1} }}
	\exp \Bigl( K \omega \frac{p}{p + 1} \Bigr).
	\ea
	$$
	
	\item Finally, using that $\exp(x) \geq x$ for all $x \geq 0$, we have
	$$
	\ba{rcl}
	K & \leq &
	\frac{p + 1}{p \, \omega} \exp\Bigl( K \omega \frac{p}{p + 1} \Bigr).
	\ea
	$$
\end{enumerate}
Therefore,
$$
\ba{rcl}
\frac{\ell_{k + 1} D_{k + 1}}{\delta^{p + 1 \over p}} & \leq &
\exp\Bigl(
K \omega  \frac{e}{(e - 1)p}
\Bigr)
\cdot
\Bigl( 2^{2 - p} \sigma_d(\psi) A_1 \Bigr)^{1 \over p}
\cdot
\Bigl( 1 + \frac{1}{\gamma_0} \Bigr) \\
\\
& \; & \cdot \; \biggl(
\frac{
	\exp\bigl( K \omega \frac{p}{p + 1} \bigr)
}{p \, \omega \gamma_0^{1 / (p + 1) }}
\Bigl(
(p + 1)^2 2^{ \frac{2p^2 + p + 1}{p + 1} }
+ (p + 1)^{\frac{p + 2}{p + 1}} 2^{\frac{p - 1}{p + 1}}
\Bigr)
\biggr)^{p + 1 \over p} \\
\\
& < &
\exp\biggl(
K \omega \Bigl( \frac{e}{(e - 1)p} + 1 \Bigr)
\biggr)
\cdot
\Bigl(
\frac{1}{p \, \omega}
\Bigr)^{p + 1 \over p}
\cdot
\Bigl(
\frac{ \sigma_d(\psi) A_1}{\gamma_0}
\Bigr)^{1 \over p} \\
\\
& \; &
\cdot \;
\Bigl( 1 + \frac{1}{\gamma_0} \Bigr)
\cdot (p + 1)^{\frac{2(p + 1)}{p}} 2^{\frac{2p^2 + p + 4}{p}} \\
\\
& = &
\exp\biggl(
K \omega \Bigl( \frac{e}{(e - 1)p} + 1 \Bigr)
\biggr)
\cdot \max\{1,
\Bigl(
\frac{4 \sigma_d(\psi) p!}{(p + 1) L_p(f)}
\Bigr)^{1 \over p}  \} \\
\\
& \; &
\cdot \;
\Bigl( 1 + \frac{1}{\gamma_0} \Bigr)
\cdot \frac{(p + 1)^{p + 2 \over p}}{p^{p + 1 \over p}} \cdot
2^{\frac{2p^2 + p + 4}{p}},
\ea
$$
and we obtain~\cref{TheoremFinal2Oracles}.
\end{proof}

According to~\cref{TheoremFinal1,TheoremFinal2},
the rate of convergence 
for the outer iterations of~\cref{TensorAlgorithm}
is of the same order as the one of the accelerated 
tensor method from~\cite{nesterov2018implementable}.
However, at each step it uses a logarithmic number 
of steps of the basic method. It seems to be 
a reasonable price for the level of generality.
Indeed, we are free to choose
an arbitrary method as the basic one.
The only requirement for it is
the possibility of solving the inner subproblem~\cref{SubproblemSimple}
efficiently.

Note that an additional feature of our methods is
that the sequences of points $\{ x_k \}_{k \geq 0}$ 
and $\{ v_k \}_{k \geq 0}$ form \textit{triangles} (see the rule~\cref{ConvexComb}).
A first-order accelerated method with this nice property 
was discovered in~\cite{gasnikov2018universal}.

%%%%%%%%%%%%%%%%%%%%%%%%%%%%%%%%
%%%%%%%%%%%%%%%%%%%%%%%%%%%%%%%%
%%%%%%%%%%%%%%%%%%%%%%%%%%%%%%%%
\section{Numerical Examples}
\label{SectionNumerical}\SetEQ
%%%%%%%%%%%%%%%%%%%%%%%%%%%%%%%%
%%%%%%%%%%%%%%%%%%%%%%%%%%%%%%%%
%%%%%%%%%%%%%%%%%%%%%%%%%%%%%%%%

\subsection{Quadratic function}

Let us compare numerical performance of 
the contracting proximal method and
the classical proximal point algorithm~\cref{ProxMethod} 
for unconstrained minimization of a convex quadratic function:
$$
\ba{rcl}
f(x) & = & \frac{1}{2}\la Ax, x\ra - \la b, x \ra, \qquad x \in \R^n,
\ea
$$
with $A = A^{*} \succeq 0$. We also run the gradient method and the accelerated
gradient method for this problem. A typical behaviour of the algorithms 
is shown in~\cref{graph1}.
The contracting proximal method has the same
iteration rate as that of the accelerated gradient method,
but requires more gradient evaluations (matrix-vector products)
per iteration.

To compute every step of the proximal algorithms, we use 
the gradient method with line search. We try different 
strategies for choosing inner accuracies $\delta_k$ and end up with a simple 
rule $\delta_k = 1 / k^2$, 
which provides a good balance in the performance of
outer proximal iterations
and the inner method.
(Usually, it requires to do about 4 inner steps per iteration.)

Data was generated randomly, but the set 
of eigenvalues of the matrix was fixed according to the sigmoid function,
for some given $\alpha > 0$
$$
\ba{rcl}
\lambda_i & = & \frac{1}{1 + \exp\bigl( \frac{\alpha}{n - 1}(n + 1 - 2i)  \bigr)},
\qquad 1 \leq i \leq n.
\ea
$$
Therefore it holds that $\lambda_1 = 1 / (1 + \exp(\alpha))$
and $\lambda_n = 1 / (1 + \exp(-\alpha))$, so
parameter $\alpha$ is related to the \textit{condition number} of the problem.

In~\cref{tableQuadratic} we demonstrate the number of iterations
and the total number of matrix-vector products, which are required
for the methods to solve the problem up to $\varepsilon = 10^{-7}$
accuracy in functional residual.

\begin{figure}[ht]
	%\vskip 0.2in
	\begin{center}
		\includegraphics[width=1.0\columnwidth]{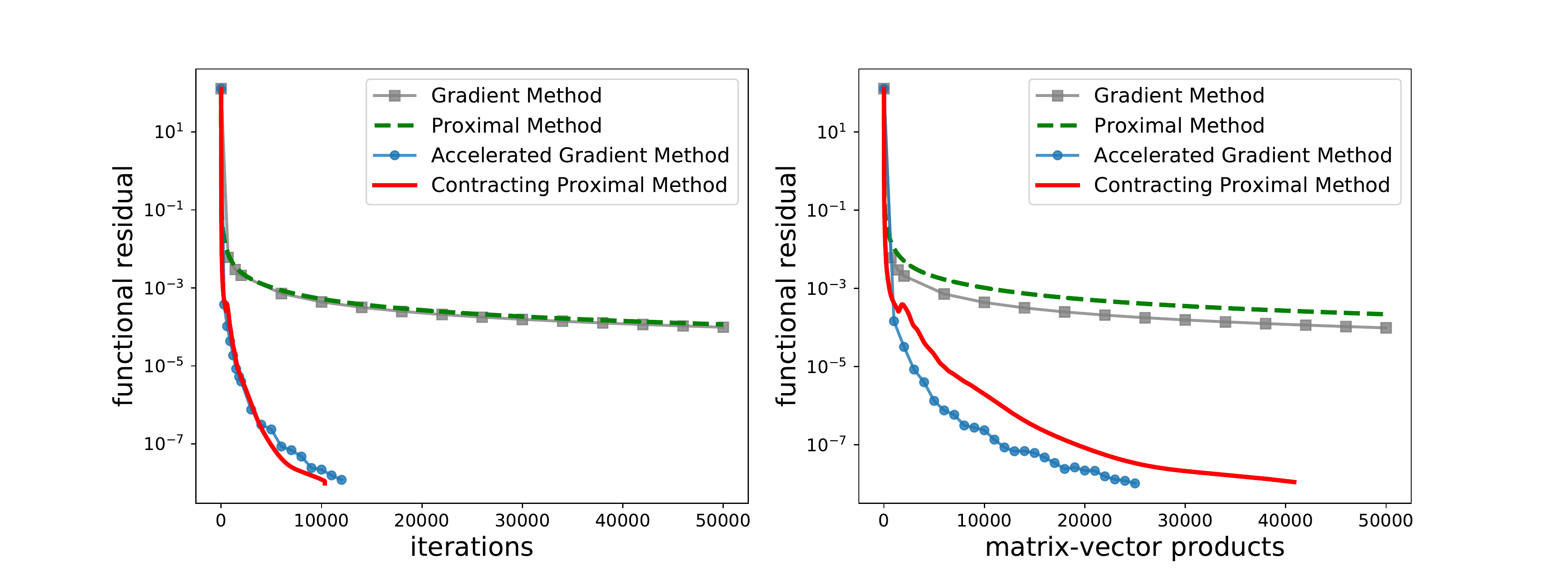}
		\caption{ 
			Convergence of first-order methods on quadratic function.}
		\label{graph1}
	\end{center}
	\vskip -0.2in
\end{figure}

\renewcommand{\arraystretch}{1.5}
\setlength{\tabcolsep}{0.4em}
\begin{table}[h!]
	\scriptsize
	\centering
	\begin{tabular}{ | r|c | p{9ex} | p{9ex}  || p{9ex} | p{9ex}  ||  p{10ex} | p{10ex} || p{10ex} | p{12ex} | p{0ex} }
		\cline{3-10}
		%\hline
		\multicolumn{2}{  r | }{}
		& \multicolumn{2}{c || }{ 
			\begin{tabular}{c} 
				\textbf{Gradient} \\[-1ex] 
				\textbf{Method}  
			\end{tabular}
		}
		&  \multicolumn{2}{ c || }{ 
			\begin{tabular}{c} 
				\textbf{Proximal} \\[-1ex] 
				\textbf{Method}  
			\end{tabular}
		}
		& \multicolumn{2}{ c || }{ 
			\setlength{\tabcolsep}{0ex}
			\begin{tabular}{c} 
				\textbf{Accelerated} \\[-1ex] 
				\textbf{Gradient Method}  
			\end{tabular}
		}
		& \multicolumn{2}{ c | }{ \begin{tabular}{c} \textbf{Contracting} \\[-1ex] 
				\textbf{Proximal Method}  \end{tabular}  
		} \\
		\cline{1-10}
		$n$ & $q$ & \centering iter & \centering mat-vec & \centering iter & \centering mat-vec & \centering iter & \centering mat-vec & \centering iter &  \centering mat-vec &  \\[1ex]
		\cline{1-10}
		500 & $10^{-2}$ & \centering 339 & \centering 339  &  \centering 361 & \centering 1044  &  \centering 115 & \centering 229  &  \centering \textbf{74} & \centering \textbf{137} & \\
		& \centering $10^{-4}$ &     \centering 12158 & \centering 12158  &  \centering 12842 & \centering \centering 36731  & \centering \textbf{350} & \centering \textbf{699}  &  \centering 393 & \centering 1104 & \\
		& $10^{-6}$ &         \centering 96072 & \centering 96072  &  \centering 99269 & \centering 313795  &  \centering \textbf{854} & \centering \textbf{1707}  &  \centering 1081 & \centering 3780 & \\
		\cline{1-10}
		1000 & $10^{-2}$ & \centering 338 & \centering 338  &  \centering 359 & \centering 1035  &  \centering 110 & \centering 219  &  \centering \textbf{73} & \centering \textbf{135} & \\
		& $10^{-4}$ & \centering 11884 & \centering 11884  &  \centering 11912 & \centering 56996  &  \centering \textbf{360} & \centering \textbf{719}  &  \centering 361 & \centering 1014 & \\ 
		& $10^{-6}$ & \centering 77675 & \centering 77675  &  \centering 80758 & \centering 239508  &  \centering \textbf{755} & \centering \textbf{1509}  &  \centering 1117 & \centering 3957 & \\
		\cline{1-10}
	\end{tabular}
	\caption{Minimization of quadratic function, 
		$q = \lambda_{\min}(A) / \lambda_{\max}(A)$.}
	\label{tableQuadratic}
\end{table}

We see that the contracting proximal method
is \textit{always better} than the usual proximal algorithm.
It requires about the same number of iterations as 
the accelerated gradient methods,
but it needs to spend more oracle 
calls per iteration, which confirms the theory.

\subsection{Log-Sum-Exp}

In the next example we compare a performance of
second-order methods for unconstrained minimization 
of the following objective:
$$
\ba{rcl}
f(x) & = & \mu \ln \left( \sum\limits_{i = 1}^m 
\exp\left(  \frac{ \la a_i, x \ra - b_i }{\mu}  \right)  \right),
\qquad x \in \R^n,
\ea
$$
where $\mu > 0$ is a parameter, 
while coefficients of the vectors $\{a_i\}_{i = 1}^m$ and $b$
are randomly generated, and we set $m = 6n$.

We compare the cubically regularized Newton method~\cite{nesterov2006cubic}
and its accelerated variant from~\cite{nesterov2008accelerating}
with the contracting proximal cubic Newton 
(\cref{TensorAlgorithm} with $p = 2$)
for minimizing the objective up to $\varepsilon = 10^{-8}$
accuracy in functional residual. In these algorithms 
we use the following Euclidean norm for the primal space,
$\|x\| = \la Bx, x \ra^{1/2}$,
with matrix $B = \sum_{i = 1}^m a_i a_i^T$, 
and fix the regularization parameter equal to~$1$. 
The results are shown in \cref{tableLogSumExp}.

\renewcommand{\arraystretch}{1.5}
\setlength{\tabcolsep}{0.7em}
\begin{table}[h!]
	\scriptsize
	\centering
	\begin{tabular}{ | r|c | p{9ex} | p{9ex}  || p{9ex} | p{9ex}  ||  p{10ex} | p{14ex} | p{0ex} }
		\cline{3-8}
		%\hline
		\multicolumn{2}{  r | }{}
		& \multicolumn{2}{c || }{ 
			\begin{tabular}{c} 
				\textbf{Cubic Newton}
			\end{tabular}
		}
		&  \multicolumn{2}{ c || }{ 
			\begin{tabular}{c} 
				\textbf{Accelerated} \\[-1ex] 
				\textbf{Cubic Newton}  
			\end{tabular}
		}
		& \multicolumn{2}{ c | }{ 
			\setlength{\tabcolsep}{0ex}
			\begin{tabular}{c} 
				\textbf{Contracting Proximal} \\[-1ex] 
				\textbf{Cubic Newton}  
			\end{tabular}
		}
		\\
		\cline{1-8}
		$n$ & $\mu$ & \centering iter & \centering oracle & \centering iter & \centering oracle & \centering iter & \centering oracle &  \\[1ex]
		\cline{1-8}
		50 & $1$ & \centering 389 & \centering 389  &  \centering 177 & \centering \textbf{353}  &  \centering \textbf{112} & \centering 491  & \\
		& $0.1$ & \centering 482 & \centering 482  &  \centering 202 & \centering \textbf{403}  &  \centering \textbf{141} & \centering 587  &  \\
		& $0.05$ & \centering 886 & \centering 886  &  \centering 343 & \centering \textbf{685}  &  \centering \textbf{236} & \centering 1129  & \\
		\cline{1-8}
		100 & $1$ & \centering 834 & \centering 834  &  \centering 308 & \centering \textbf{615}  &  \centering \textbf{189} & \centering 849  & \\
		& $0.1$ & \centering 1210 & \centering 1210  &  \centering 377 & \centering \textbf{753}  &  \centering \textbf{232} & \centering 1021  &  \\ 
		& $0.05$ & \centering 2598 & \centering 2598  &  \centering 641 & \centering \textbf{1281}  &  \centering \textbf{397} & \centering 1740  &  \\
		\cline{1-8}
	\end{tabular}
	\caption{Comparison of second-order methods on Log-Sum-Exp.}
	\label{tableLogSumExp}
\end{table}

We see that the contracting proximal method outperforms the direct methods in the 
number of iterations, but usually requires additional oracle calls for solving the subproblem.

%%%%%%%%%%%%%%%%%%%%%%%%%%%%%%%%
%%%%%%%%%%%%%%%%%%%%%%%%%%%%%%%%
%%%%%%%%%%%%%%%%%%%%%%%%%%%%%%%%
\section{Conclusion}
\label{SectionConclusion}\SetEQ
%%%%%%%%%%%%%%%%%%%%%%%%%%%%%%%%
%%%%%%%%%%%%%%%%%%%%%%%%%%%%%%%%
%%%%%%%%%%%%%%%%%%%%%%%%%%%%%%%%

In this work, we propose a general acceleration scheme, 
based on proximal iterations.
There are two distinguishing features of our methods: 
employing the \textit{contraction} of the smooth 
component of the objective (this provides the acceleration)
 and flexibility
of \textit{prox-function} 
(its choice should take into account both 
the geometry of the problem and the order of the smoothness).

One of the recent important applications of the accelerated 
proximal point methods in machine learning is the universal framework 
\textit{Catalyst}, applicable to the first-order 
methods~\cite{lin2015universal, lin2018catalyst}.
This is a powerful approach for accelerating many specific optimization 
methods in a common way.
We believe that the results of this paper can help in advancing in this direction, 
resulting in the faster high-order methods for many practical applications.

%\bibliographystyle{siamplain}
%\bibliography{bibliography}

\begin{thebibliography}{10}
	
	\bibitem{arjevani2019oracle}
	{\sc Y.~Arjevani, O.~Shamir, and R.~Shiff}, {\em Oracle complexity of
		second-order methods for smooth convex optimization}, Mathematical
	Programming, 178 (2019), pp.~327--360.
	
	\bibitem{baes2009estimate}
	{\sc M.~Baes}, {\em Estimate sequence methods: extensions and approximations},
	Institute for Operations Research, ETH, Z{\"u}rich, Switzerland,  (2009).
	
	\bibitem{bauschke2016descent}
	{\sc H.~H. Bauschke, J.~Bolte, and M.~Teboulle}, {\em A descent lemma beyond
		lipschitz gradient continuity: first-order methods revisited and
		applications}, Mathematics of Operations Research, 42 (2016), pp.~330--348.
	
	\bibitem{doikov2019local}
	{\sc N.~Doikov and Y.~Nesterov}, {\em Local convergence of tensor methods},
	CORE Discussion Paper 2019/21,  (2019).
	
	\bibitem{doikov2019minimizing}
	{\sc N.~Doikov and Y.~Nesterov}, {\em Minimizing uniformly convex functions by
		cubic regularization of {N}ewton method}, \href{https://arxiv.org/abs/1905.02671}{arXiv:1905.02671},
	(2019).
	
	\bibitem{doikov2018randomized}
	{\sc N.~Doikov and P.~Richt{\'a}rik}, {\em Randomized block cubic {N}ewton
		method}, in Proceedings of the International Conference on Machine Learning, 2018,
	pp.~1289--1297.
	
	\bibitem{gasnikov2019near}
	{\sc A.~Gasnikov, P.~Dvurechensky, E.~Gorbunov, E.~Vorontsova,
		D.~Selikhanovych, C.~A. Uribe, B.~Jiang, H.~Wang, S.~Zhang, S.~Bubeck,
		J.~Qijia, Y.~T. Lee, L.~Yuanzhi, and S.~Aaron}, {\em Near optimal methods for
		minimizing convex functions with {L}ipschitz $p$-th derivatives}, in Proceedings of the
	Conference on Learning Theory, 2019, pp.~1392--1393.
	
	\bibitem{gasnikov2018universal}
	{\sc A.~Gasnikov and Y.~Nesterov}, {\em Universal method for stochastic
		composite optimization problems}, Computational Mathematics and Mathematical
	Physics, 58 (2018), pp.~48--64.
	
	\bibitem{grapiglia2019accelerated}
	{\sc G.~N. Grapiglia and Y.~Nesterov}, {\em Accelerated regularized {N}ewton
		methods for minimizing composite convex functions}, SIAM Journal on
	Optimization, 29 (2019), pp.~77--99.
	
	\bibitem{guler1991convergence}
	{\sc O.~G{\"u}ler}, {\em On the convergence of the proximal point algorithm for
		convex minimization}, SIAM Journal on Control and Optimization, 29 (1991),
	pp.~403--419.
	
	\bibitem{guler1992new}
	{\sc O.~G{\"u}ler}, {\em New proximal point algorithms for convex
		minimization}, SIAM Journal on Optimization, 2 (1992), pp.~649--664.
	
	\bibitem{ivanova2019adaptive}
	{\sc A.~Ivanova, D.~Grishchenko, A.~Gasnikov, and E.~Shulgin}, {\em Adaptive
		catalyst for smooth convex optimization}, \href{https://arxiv.org/abs/1911.11271}{arXiv:1911.11271},
	(2019).
	
	\bibitem{lin2015universal}
	{\sc H.~Lin, J.~Mairal, and Z.~Harchaoui}, {\em A universal catalyst for
		first-order optimization}, in Advances in Neural Information Processing
	Systems, 2015, pp.~3384--3392.
	
	\bibitem{lin2018catalyst}
	{\sc H.~Lin, J.~Mairal, and Z.~Harchaoui}, {\em Catalyst acceleration for
		first-order convex optimization: from theory to practice}, Journal of Machine
	Learning Research, 18 (2018), pp.~1--54.
	
	\bibitem{lu2018relatively}
	{\sc H.~Lu, R.~M. Freund, and Y.~Nesterov}, {\em Relatively smooth convex
		optimization by first-order methods, and applications}, SIAM Journal on
	Optimization, 28 (2018), pp.~333--354.
	
	\bibitem{monteiro2013accelerated}
	{\sc R.~D. Monteiro and B.~F. Svaiter}, {\em An accelerated hybrid proximal
		extragradient method for convex optimization and its implications to
		second-order methods}, SIAM Journal on Optimization, 23 (2013),
	pp.~1092--1125.
	
	\bibitem{nemirovskii1983problem}
	{\sc A.~Nemirovskii, and D.~B. Yudin}, {\em Problem Complexity
		and Method Efficiency in Optimization}, 
	Wiley, New York, 1983.
	
	\bibitem{nesterov1983method}
	{\sc Y.~Nesterov}, {\em A method for solving the convex programming problem
		with convergence rate ${O}(1/k^2)$}, in Dokl. akad. nauk SSSR, vol.~269,
	(1983), pp.~543--547.
	
	\bibitem{nesterov2008accelerating}
	{\sc Y.~Nesterov}, {\em Accelerating the cubic regularization of {N}ewton's
		method on convex problems}, Mathematical Programming, 112 (2008),
	pp.~159--181.
	
	\bibitem{nesterov2013gradient}
	{\sc Y.~Nesterov}, {\em Gradient methods for minimizing composite functions},
	Mathematical Programming, 140 (2013), pp.~125--161.
	
	\bibitem{nesterov2018lectures}
	{\sc Y.~Nesterov}, {\em Lectures on Convex Optimization}, 
	Springer Optim. Appl. 137, Springer,
	New York,
	2018.
	
	\bibitem{nesterov2018implementable}
	{\sc Y.~Nesterov}, {\em Implementable tensor methods in unconstrained convex
		optimization}, Mathematical Programming, doi:10.1007/s10107-019-01449-1
	(2019).
	
	\bibitem{nesterov2006cubic}
	{\sc Y.~Nesterov and B.~T. Polyak}, {\em Cubic regularization of {N}ewton's
		method and its global performance}, Mathematical Programming, 108 (2006),
	pp.~177--205.
	
	\bibitem{rockafellar1976monotone}
	{\sc R.~T. Rockafellar}, {\em Monotone operators and the proximal point
		algorithm}, SIAM journal on control and optimization, 14 (1976),
	pp.~877--898.
	
	\bibitem{salzo2012inexact}
	{\sc S.~Salzo and S.~Villa}, {\em Inexact and accelerated proximal point
		algorithms}, Journal of Convex analysis, 19 (2012), pp.~1167--1192.
	
	\bibitem{schmidt2011convergence}
	{\sc M.~Schmidt, N.~L. Roux, and F.~R. Bach}, {\em Convergence rates of inexact
		proximal-gradient methods for convex optimization}, in Advances in neural
	information processing systems, 2011, pp.~1458--1466.
	
	\bibitem{solodov2001unified}
	{\sc M.~V. Solodov and B.~F. Svaiter}, {\em A unified framework for some
		inexact proximal point algorithms}, Numerical functional analysis and
	optimization, 22 (2001), pp.~1013--1035.
	
	\bibitem{van2017forward}
	{\sc Q.~Van~Nguyen}, {\em Forward-backward splitting with bregman distances},
	Vietnam Journal of Mathematics, 45 (2017), pp.~519--539.
	
\end{thebibliography}

\end{document}